\theoremstyle{plain}
\newtheorem{thm}{Theorem}
\newtheorem{lem}[thm]{Lemma}
\newtheorem{cor}{Corollary}
\theoremstyle{definition}
\newtheorem{defn}{Definition}
\newtheorem{conj}{Conjecture}
\newtheorem{ques}{Question}
\newtheorem {note}{Observation}
\newcommand{\N}{\mathbb{N}}
\newcommand{\M}{{\cal M}}
\newcommand{\G}{{\cal G}}
\renewcommand{\H}{{\cal H}}
\renewcommand{\P}{{\cal P}}
\newcommand{\A}{{\cal A}}
\newcommand{\X}{{\cal X}}
\newcommand{\ms}{{\ensuremath \star}}
\renewcommand{\geq}{\geqslant}
\renewcommand{\leq}{\leqslant}
\newcommand{\x}{{\boldsymbol x}}
\newcommand{\y}{{\boldsymbol y}}
\newcommand{\z}{{\boldsymbol z}}
\newcommand{\m}{{\boldsymbol m}}
\newcommand{\0}{{\boldsymbol 0}}
\newcommand{\NN}[1]{{\ensuremath {\N_0^{#1}}}}
\newcommand{\nn}[1]{{\ensuremath {\N_0^{#1} \setminus\{\0\}}}}
\title{A mis\`ere-play $\star$-operator}
\author{Matthieu Dufour\footnote{Dept. of Mathematics, Universit\'e du Qu\'ebec \`a Montr\'eal, Montr\'eal, Qu\'ebec H3C 3P8, Canada, dufour.matthieu@uqam.ca} \and Silvia Heubach\footnote{Dept. of Mathematics, California State University Los Angeles, Los Angeles, CA 90032, USA, sheubac@calstatela.edu} \and Urban Larsson\footnote{Dept. of Mathematics and Statistics, Dalhousie University, Halifax, Canada,  urban031@gmail.com}}
\begin{document}
\maketitle
\begin{abstract}
We study the $\star$-operator (Larsson et al, 2011) of impartial vector subtraction games (Golomb, 1965). Here we extend the notion to the mis\`ere-play convention, and prove convergence and other properties; notably more structure is obtained under mis\`ere-play as compared with the normal-play convention (Larsson 2012).  
\end{abstract}

\section{Introduction}

The notion of \emph{vector subtraction games} was introduced by Golomb \cite{G66}, motivated by methods in computer science. Then, much later the game family reappeared \cite{DR10} under a different name (invariant subtraction games) and now the motivation was a conjecture in number theory. The proposed problem was solved \cite{LHF11} by introducing the normal-play \emph{$\star$-operator} on the class of games, and subsequently, some very general properties of this $\star$-operator were discovered \cite{L12}. All this work was done using the so-called normal-play convention for impartial combinatorial games \cite{BCG82}. Here we introduce the $\star$-operator under the \emph{mis\`ere-play} convention and prove some general properties. Let us begin by using an example of a game in one dimension (those are usually just called subtraction games).

Imagine two players who alternate in removing tokens from a single heap, subject to the rules that either 4 or 9 tokens be removed, and that if you cannot move, you win (\emph{mis\`ere-play}). In this particular game the first player to move wins if there are less than 4 tokens in the pile, because these positions are terminal, and if there are between 4 and 7 tokens, then the other player wins. 
By a recursive procedure, one computes the pattern of P-positions (these are the starting positions from which the current player cannot win given optimal play). The initial pattern of P-positions is shown in the first line of Figure~\ref{fig:exgame} and the sequence is periodic as illustrated (on the 0th line we show the allowed moves of this game). 

Since the underlying structure of the moves and the P-positions is the same (the non-negative integers), one can play a new game where the P-positions of the first game are used as moves in the new game. The new set of moves is then $$\G^1=\{4,5,6,7,12,17,18,19,20,25,\ldots\}.$$ The P-positions of this new game are shown on the second row of Figure~\ref{fig:exgame}. By iterating this process we get a sequence of  games where the moves in the next game consist of the P-positions of the previous game (this is the $\star$-operator to be defined formally below). 

\begin{figure}[hbt]
\begin{center}
\includegraphics[scale=0.80]{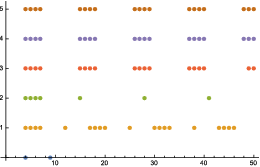}
\end{center}
\caption{The move sets of a sequence of games arising from the initial game with move set  $\G^0=\{4, 9\}$. The values at level $i$ represent the P-positions of the game whose moves are listed on level $i-1$. \iffalse They become the allowed moves in the $i$th game.\fi}
\label{fig:exgame}
\end{figure}
In Figure~\ref{fig:exgame}, the games shown on rows 4 and 5 have the same moves, and one of the results in this paper is  that the sequence of games converges to a limit game, for any choice of the initial set of moves, and in any dimension.

We also show that the limit game is the same for any two games (in the same dimension) if the set of smallest (in the natural partial order) moves is the same.  Moreover, the limit game is reflexive, and we show that it can be defined (non-recursively) by a simple ``sum-set'' rule. This is the third main result of this paper. We have started preliminary work on two dimensions and have obtained partial structure results, which we will discuss in the section on future work. 
 
 We now explain the basic concepts and definitions. Let $\N$ denote the positive integers, and $\N_0$ the non-negative integers. Unless otherwise stated, $\M$ will be a mis\'ere play game on $d\in \N$ heaps (dimension $d$), and we use calligraphy notation for sets when we want to indicate that  we think of a subset of vectors as a game. All games we consider are impartial and of the following form, e.g. \cite{ DR10,G66}.

\begin{defn}\label{def:game}
Let $d\in \N$, and let $\M \subseteq \N^d_0$ be the set of moves. In the  {\em $d$-dimensional vector subtraction game} $\M$,  a player can move from position $\x\in \N^d_0$ to position $\y\in \N^d_0$ if $\x-\y\in \M$.   A position $\y$ for which $\x-\y \in \M$ is called an {\em option of} $\x$.  We consider the mis\`ere-play version of the game, that is, a player who cannot move wins. \end{defn}


Note that when we talk about a game, we refer to its rule set or {\em subtraction set}, and we are interested to determine for all positions whether they are P- or N-positons. We are not interested in finding optimatial strategies from a starting position, but rather want to investigate  the patterns of the set of P-positions.

Since our games are multidimensional, we use the natural partial order on $\N^d_0$, namely $\x \preceq \y$ if and only if $x_i \leq y_i$ for $i = 1, \ldots, d$, and $\x \prec \y$ if and only  $\x \preceq \y$  with strict inequality holding for at least one component.

\begin{defn}
A non-empty subset $I$ of a partially ordered set $(X,\preceq)$ is a {\em lower ideal} if for every $\x \in I$, $\y \preceq \x$ implies that $\y \in I$.
\end{defn}

We denote the set of terminal positions of the game $\M$ by $T_{\M}$. By definition of a vector subtraction game, $T_{\M}$ is the set of all $\x$ smaller than or unrelated to every $\m\in \M$, that is, $T_{\M}=\{\x\not \succeq \m \mid \m\in \M\}$. Of course, if $\boldsymbol 0\in \M$ then  $T_{\M}=\varnothing$. Moreover, since we play the mis\`ere version, we have the following observation.

\begin{note}\label{obs:1}
For any game $\M$, in any dimension, the set of terminal positions is a lower ideal, and, moreover, $ T_\M\subseteq N(\M)$.
\end{note}


It is well-known that for impartial games without cycles (that is, no repeated game positions), there are exactly two {\em outcome classes}, called N and P~\cite{BCG82}. In mis\`ere-play, they are characterized as follows: a position is an N-position if it has no option, or if there is at least one P-position in its set of options. Otherwise, a position is a P-position. In other words, a position is a P-position if and only if its set of options is a non-empty set of N-position. 
We denote the set of N-positions of a mis\`ere-play game $\M$ by $N(\M)$, and the set of P-positions by $P(\M)$. 

Note that in Definition~\ref{def:game}, we allow $\M=\varnothing$ and also the case $\0\in \M$  (that is, a pass move is allowed). If $\0 \in \M$, then each position can be repeated so the outcome is a draw, and hence $P(\M)=\varnothing$. This trivial draw game was  originally included in the definition of normal-play vector subtraction games by Golomb \cite{G66}\footnote{He also restricted the set of terminal positions to contain only $\boldsymbol 0$, a definition not used in connection with the $\star$-operator.}. It is not very interesting from a game player's perspective, but from a theoretical point of view, as we will see, there is no reason to exclude it. Similarly, if $\M=\varnothing$, then $P(\M)=\varnothing$, because all positions are N-positions due to the mis\`ere convention. 

On the other hand, if $ \0 \not\in\M$, then we get a recursive definition of the outcomes of all positions from the characterization of N- and P-positions above, and by Observation~\ref{obs:1}, recurrence starts with N-positions. 
Moreover observe that any smallest move $\0 \neq \m \in \M$ is a P-position, so in this case $P(\M) \neq \varnothing$. In fact, each game $\M$ has a unique set of minimal elements which we denote by $\min({\M})$,\footnote{In one dimension $\min({\M})$ consists of a single value and we sometimes abuse notation and write the minimal number instead of the set. If $\M =\varnothing$ then we define $\min \M =\varnothing$.} and we have the following fundamental observation.  

\begin{note}\label{obs:2}
For any game $\M$, in any dimension, if $\0\ne \min(\M)$, then $\min (\M)\subseteq P(\M)$.
\end{note}


Since the underlying structure of moves and P-positions is the same (sets of integer vectors), we can iteratively create new games \cite{LHF11,L12}.

\begin{defn}\label{def:star}
Let $\M$ be a game in any dimension. Then $\M^\ms$ is the game with subtraction set $\M^\ms =  P(\M)$.
\end{defn}

This defines the {\em mis\`ere-play $\star$-operator\footnote{Note that the $\star$-operator under mis\`ere rules is the same as the $\star$-operator in normal-play \cite{LHF11,L12}. However, since in mis\`ere-play  $\0$ is never a P-position, the definition simplifies in this case. }} which acts on impartial subtraction games\footnote{The $\star$-operator is in fact an infinite class of operators, one operator for each dimension. However, we will refer to "the" $\star$-operator because the operator acts in the same way in each dimension.}. A P-position in game $\M$ becomes a move in game $\M^\star$ (and an N-position in $\M$ becomes a non-move in game $\M^\star$). We can now study properties of  sequence of games created by repeated application of the $\star$-operator. First we define special sequences of games, obtained by the fixed points of the operator. 

\begin{defn}\label{def:refl}
The game $\M\subseteq \N^d_0$ is {\em reflexive} if $\M =\M^\ms$.
\end{defn}

\begin{defn}
Let $\M^0=\M$ be a game in any dimension, and let $\M^i=(\M^{i-1})^\star$ for $i>0$. The sequence of games $\M^i$ \emph{converges} (with respect to $\ms$) if $\M^\infty=\lim_{i\rightarrow \infty} {\M}^i$ exists.
\end{defn}

Note that due to the recursive definition of the outcomes of an impartial combinatorial game the notion of convergence is `point-wise'. The following lemma is immediate from the definition of reflexivity.

\begin{lem}\label{lem:1}
The game $\M\subseteq \N^d_0$ is reflexive if and only if there is a game $\X$ such that $\M = \X^\infty$.
\end{lem}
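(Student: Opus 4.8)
I would prove the two implications separately. The forward implication is immediate: if $\M$ is reflexive, then $\M=\M^\ms$ by Definition~\ref{def:refl}, so a trivial induction gives $\M^i=\M$ for every $i\ge 0$ (base case $\M^0=\M$; step $\M^i=(\M^{i-1})^\ms=\M^\ms=\M$); thus the sequence $(\M^i)$ is constant, it converges, and $\X:=\M$ satisfies $\M=\X^\infty$.

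All the content is in the reverse implication: assuming $\M=\X^\infty$ for some game $\X$, I must show that $\M$ is reflexive, i.e.\ that $P(\X^\infty)=\X^\infty$. The strategy is to show that the $\ms$-operator commutes with the (point-wise) limit, and the mechanism that makes this work is the local finiteness of these games: the options of a position $\x$ in a game $\G$ are exactly the vectors $\x-\m$ with $\m\in\G$ and $\m\preceq\x$, so there are only finitely many of them and they all satisfy $\y\preceq\x$. Since $P(\G)$ never contains $\0$ for any $\G$ (recall $\0$ is never a $P$-position in mis\`ere play), we have $\0\notin\X^i$ for $i\ge 1$ and $\0\notin\X^\infty$, so in fact all options of $\x$ lie \emph{strictly} below $\x$; this is what legitimizes an induction on $(\N^d_0,\prec)$ (equivalently, on $|\x|=\sum_i x_i$).

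Concretely, I would prove $\x\in P(\X^\infty)\iff\x\in\X^\infty$ by induction on $\x$ in that order. Fix $\x$ and assume the equivalence for all $\y\prec\x$. Using point-wise convergence at the finitely many positions $\preceq\x$, choose an index $i$ so large that: $\X^i$ and $\X^\infty$ contain the same moves $\m$ with $\m\preceq\x$; for every $\y\prec\x$ the outcome of $\y$ in the game $\X^i$ equals its outcome in the game $\X^\infty$ (this uses $\y\in P(\X^i)\iff\y\in\X^{i+1}$, point-wise convergence at $\y$, and the induction hypothesis); and $\x\in\X^{i+1}\iff\x\in\X^\infty$. For this $i$, the option set of $\x$ is literally the same in $\X^i$ and in $\X^\infty$, so feeding the agreeing sub-$\x$ outcomes into the mis\`ere rule ($\x$ is a $P$-position iff its option set is non-empty and consists entirely of $N$-positions) yields $\x\in P(\X^\infty)\iff\x\in P(\X^i)\iff\x\in\X^{i+1}\iff\x\in\X^\infty$. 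This closes the induction, so $P(\X^\infty)=\X^\infty$ and $\M=\X^\infty$ is reflexive.

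The step I expect to be delicate is exactly the interchange of $\ms$ with the limit; the rest is bookkeeping. The two small points to be careful about are (a) that $\0$ lies in no $P$-set, so that $\prec$ really is a well-founded order for the induction and every option is a smaller position, and (b) upgrading convergence ``at a point'' to uniform agreement of $\X^i$ and $\X^\infty$ on the finite down-set of $\x$, which is just a finite maximum of indices.
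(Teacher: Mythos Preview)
Your argument is correct. The forward direction matches the paper's proof verbatim (take $\X=\M$ and observe the constant sequence). For the converse, the paper simply asserts that ``by definition of a limit game, $\M$ is reflexive'' and calls the lemma ``immediate from the definition of reflexivity''; you have instead written out in full the justification that the paper leaves implicit, namely that the $\ms$-operator commutes with the pointwise limit. Your mechanism---local finiteness of the option set below each $\x$, well-founded induction on $\prec$ (after noting $\0\notin\X^i$ for $i\ge 1$), and taking a finite maximum of stabilization indices over the down-set of $\x$---is exactly the right way to make that one-line claim rigorous. So your route is not genuinely different from the paper's; it is a careful unpacking of what the paper treats as obvious.
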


\begin{proof} If $\M$ is reflexive, then we may take $\X=\M$, because $\M = \M^\star =\cdots =\M^\infty$. If $\M = \X^\infty$, for some game $\X$, then by definition of a limit game, $\M$ is reflexive.
\end{proof}

\begin{note}\label{obs:Pempty}
We have that $P(\M)=\varnothing$ if and only if $\0 \in\M$ or $\M=\varnothing$. Consequently,  if $\0 \in\M$ or $\M=\varnothing$, then $\M^\infty=\varnothing$.
\end{note}

Vector subtraction games that have the same sets of P-positions have been studied before (see e.g. \cite{LW}). We will be particularly interested in games for which the set of P-positions is a reflexive game, which motivates the following definition. 
\begin{defn}
Given mis\`ere or normal-play convention, we call a {set} of games $G = \{\G_i\}$ $S$-\emph{solvable} if, for all $i$, $P(\G_i) = S$. If $G$ contains all such games (that is, $\H\not\in G$ implies $P(\H)\ne S$), then we say that the set of games $G$ is $S$-\emph{complete}. 
\end{defn}

In the next section we expand on the one heap example from Figure~\ref{fig:exgame}. Then, in Section~\ref{sec:conv}, we show properties related to convergence in any dimension. In Section~\ref{sec:oneheap}, we discuss structure results on one heap. 
In Section~\ref{sec:twoheaps}, we indicate future directions on two heaps of tokens.

\section{One heap examples}\label{ex:1Dlim}
We begin by illustrating our results on reflexive games and their limit behavior via the following examples of play on one heap.

Figure~\ref{fig:oneDim} shows the result of applying the  $\ms$-operator five times to two different games. On the left, the move set is  $\H^0=\{4,7,11\}$, while on the right, it is $\G^0=\{4,9\}$ (same as in Figure~\ref{fig:exgame}). Note that both sets have the same minimal move, $k=4$.  Figure~\ref{fig:oneDim} suggests that both games converge to the same limit game, which exhibits a  periodic structure: it consists of groups of $k$ consecutive integers, and the smallest values in consecutive groups differ by $13=3\cdot 4-1=3\cdot k-1$. We will show that all games $\M$ have a limit game under the mis\`ere-play $\star$-operator and that the limit game is determined by the set of smallest elements.

\begin{figure}[htb]
\begin{center}
\includegraphics[scale=0.75]{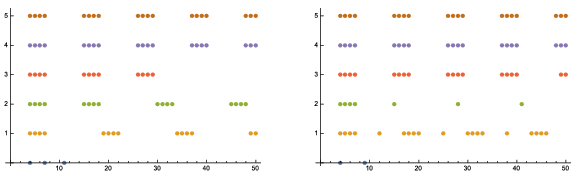}
\end{center}
\caption{The behavior of the  $\ms$-operator for two different games, $\H^0=\{4, 7, 11\}$ and $\G^0=\{4, 9\}$, that have the same minimal move. The values at level $i$ represent the move sets $\H^i$ and $\G^i$, respectively. }
\label{fig:oneDim}
\end{figure}

In proving the convergence result, the approach is to show that the outcome class (move or non-move) of the  smallest position with differing outcome class  in consecutive games will become  ``fixed" in subsequent iterations. Therefore, the set of positions whose outcome class remains unchanged from iteration to iteration increases in each step, and any values already in the set of ``fixed" positions cannot become ``unfixed". Figure~\ref{fig:conv} shows the first five iterations of the game $\G^0=\{4,9\}$. The rectangles identify the smallest elements that differ when comparing $\G^i$ and $\G^{i+1}$. For example, for games $\G^0$ and $\G^1$, the smallest differing element is $x=5$. For $\G^0$, $x=5$ is not a move, but for $\G^1$ (and all subsequent games) it is. Similarly,  the smallest differing element when comparing  $\G^1$ and $\G^2$ is $x = 12$, which is a move in $\G^1$, but then becomes fixed as a non-move in $\G^2$ and subsequent games. For the game $\G^0=\{4,9\}$, the initial set of {\em outcome-fixed} positions is $\{1, 2, 3, 4\}$ (the terminal positions and the smallest move),  $\{ 1, 2, \dots , 11\}$ after the first iteration, then $\{ 1, 2, \dots , 15\}$, and finally $\{ 1, 2, \dots , 47\}$. 

\vspace{0.2in}

\begin{figure}[hbt]
\begin{center}
\includegraphics[scale=0.45]{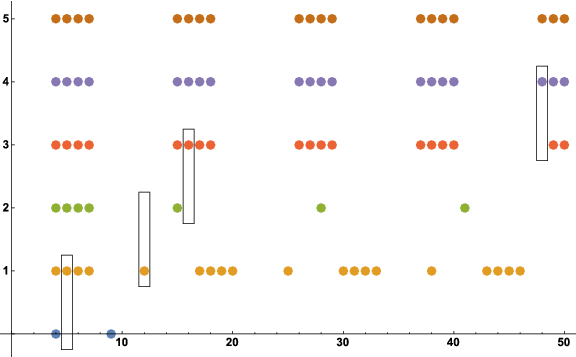}
\end{center}
\caption{Rectangles identifying the smallest elements with differing outcome class in the $i$th and the $(i+1)$st iteration of the game $G^0=\{4,9\}$.}
\label{fig:conv}
\end{figure}


\section{Convergence and reflexivity}\label{sec:conv}
As we have seen, the definition of the $\star$-operator does not depend on the given dimension, and as we will see, neither does its most notable property, convergence to a fixed point, the class of reflexive games being the fixed points of the operator. The following lemma makes this property conceivable.

\begin{lem}\label{lem:mincond} If $\0 \not\in \M$, then $\min(\M) = \min(P(\M))$, and consequently, $\min(\M) = \min(\M^i)$ for all $i \ge 0$, and $\min(\M)= \min(\M^\infty)$ if the limit exists.
\end{lem}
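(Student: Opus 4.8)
The plan is to reduce the whole statement to the single identity $\min(\M)=\min(P(\M))$ for games with $\0\notin\M$, and then bootstrap it. To prove that identity I would check two inclusions of the sets of minimal elements. First, Observation~\ref{obs:2} already gives $\min(\M)\subseteq P(\M)$, so it remains to see that each $\m\in\min(\M)$ is minimal \emph{inside} $P(\M)$: if some $\y\in P(\M)$ satisfied $\y\prec\m$, then $\y$ would be non-terminal (a P-position has, by definition, a non-empty set of options), hence there would be a move $\m'\in\M$ with $\m'\preceq\y\prec\m$, contradicting the minimality of $\m$ in $\M$. Second, take any $\y\in\min(P(\M))$; being a P-position, $\y$ is non-terminal, so the set $\{\m'\in\M:\m'\preceq\y\}$ is non-empty and, being contained in a finite box of $\N_0^d$, has a minimal element $\m$; such an $\m$ is in fact minimal in all of $\M$, and by Observation~\ref{obs:2} it lies in $P(\M)$, so $\m\preceq\y$ together with the minimality of $\y$ in $P(\M)$ force $\m=\y$, i.e.\ $\y\in\min(\M)$.

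Next I would pass to the iterates. The key preliminary fact is that $\0$ is never a P-position: if $\0\in\M$ then $P(\M)=\varnothing$ by Observation~\ref{obs:Pempty}, while if $\0\notin\M$ then $\0$ is terminal, hence an N-position by Observation~\ref{obs:1}. Thus $\0\notin\M^\ms$ for every game, so $\0\notin\M^i$ for all $i\ge1$, and $\0\notin\M^0=\M$ by hypothesis; the identity of the previous paragraph therefore applies to every $\M^i$, and a one-line induction gives $\min(\M^i)=\min(P(\M^{i-1}))=\min(\M^{i-1})=\cdots=\min(\M)$.

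Finally, for the limit I would use that convergence is pointwise. Since $\min(\M)=\min(\M^i)\subseteq\M^i$ for every $i$, each element of $\min(\M)$ lies in all $\M^i$ and hence in $\M^\infty$. Every $\m\in\min(\M)$ is moreover minimal in $\M^\infty$: any $\z\in\M^\infty$ with $\z\prec\m$ would lie in $\M^i$ for all sufficiently large $i$, contradicting $\m\in\min(\M^i)$. Conversely, if $\y\in\min(\M^\infty)$ then $\y\in\M^i$ for all large $i$, so $\y$ dominates some $\m\in\min(\M^i)=\min(\M)\subseteq\M^\infty$, whence minimality of $\y$ gives $\y=\m\in\min(\M)$; therefore $\min(\M^\infty)=\min(\M)$.

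I do not anticipate a serious obstacle; the only points needing a little care are the observation that a minimal element of $\{\m'\in\M:\m'\preceq\y\}$ is genuinely minimal in $\M$, and the limit step, where pointwise convergence must be invoked in both directions---no point strictly below a fixed minimal move can survive into $\M^\infty$, and every minimal element of $\M^\infty$ must already occur in cofinitely many of the $\M^i$.
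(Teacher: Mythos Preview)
Your proposal is correct and follows essentially the same route as the paper. Both argue the two inclusions $\min(\M)\subseteq\min(P(\M))$ and $\min(P(\M))\subseteq\min(\M)$ via Observation~\ref{obs:2} together with the fact that anything strictly below a minimal move is terminal; the paper phrases the second inclusion as a case split (either $\m'\succ\m$ for some $\m\in\min(\M)$, or $\m'$ is terminal), whereas you construct a minimal move $\m\preceq\y$ directly, but these are logically the same. You are more explicit than the paper on the iterate and limit steps (the paper dispatches both with ``by definition of the $\star$-operator'' and ``from the definition of the limit game''), and the paper treats $\M=\varnothing$ separately while your argument covers it vacuously.
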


\begin{proof} If $\M=\varnothing$, then $P(\M)=\varnothing = \M$, so the conclusion holds. 
If $\M\neq \varnothing$, let $\m \in \min(\M)$. Then by Observation~\ref{obs:2}, $\m\in\P(\M)$. 
Also, for $\x \prec \m$, $\x \in T_{\M} \subseteq N(\M)$, and therefore, $\m\in\min(P(\M))$. 
Thus, $\min(\M) \subseteq \min(P(\M))$. On the other hand, for $\m' \in \min(P(\M))$, assume $\m' \not\in\min(\M)$. 

There are two possibilities. First, if  for $\m\in\min(\M)$, $\m' \succ \m$, then  Observation~\ref{obs:2} contradicts that $\m'\in\min(P(\M))$ (because $\m\in\P(\M)$). Second, if $\m' \not\succ \m\in\min(\M)$, then $\m'\in T_{\M}$, which contradicts $\m'\in\P(\M)$. 
Therefore, $\m'\in\min(\M)$, which implies that $\min(P(\M)) \subseteq \min(\M)$, so $\min(P(\M)) = \min(\M)$. By definition of the $\star$-operator, we have that $\min(\M) = \min(\M^i)$ for all $i \ge 0$, and the last statement follows  from the definition of the limit game.
\end{proof}

For the $\star$-operator, its definition as well as most of its important properties are independent of the dimension, and it is the main purpose of this section to study these general properties. To emphasize the type of behavior, we introduce the class of \emph{accumulation point operators}. 
\begin{defn}
Let $\Omega$ be a (totally ordered) set, and let $f:\Omega^d \rightarrow \Omega^d$ be an operator defined in any dimension $d\in \N$. Then $f$ is an  \emph{accumulation-point operator} (associated with $\Omega$) if, for any dimension $d\in \N$ and any $X\subseteq\Omega^d$, $\lim_{n\rightarrow \infty} f^n X$ exists, where $f^nX = f\!f^{n-1}X$, for $n>0$ and $f^0=f$.
\end{defn}

In the context of our vector subtraction games,  recall that $\Omega=\mathbb N_0$ includes the case of pass moves. 

\begin{thm} \label{thm:conv}
The mis\`ere-play $\star$-operator is an accumulation-point operator associated with $\mathbb N_0$. That is, for any $d\in \N$, each game $\M \subseteq \mathbb N_0^d$ converges to a (reflexive) limit game $\M^{\infty}$. 
\end{thm}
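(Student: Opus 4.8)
The plan is to establish convergence pointwise: for each position $\x$, show that its outcome class (P vs. N) in $\M^i$ stabilizes as $i\to\infty$. Since the excerpt handles the trivial cases ($\M=\varnothing$ or $\0\in\M$, where all games are $\varnothing$ eventually by Observation~\ref{obs:Pempty}), I would assume $\0\notin\M$. The key notion, foreshadowed in Section~\ref{ex:1Dlim}, is the set of \emph{outcome-fixed} positions: call $\x$ $i$-fixed if $\x$ has the same outcome class in $\M^j$ for all $j\ge i$. I want to show that for every $i$, the $i$-fixed positions form a lower ideal, and that this lower ideal strictly grows (or is already everything) as $i$ increases, so that every $\x$ is eventually fixed.

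The heart of the argument is a monotonicity/induction lemma on the partial order: I would prove that if, in comparing $\M^i$ and $\M^{i+1}$, all positions $\prec\x$ are outcome-fixed and agree, then $\x$ itself becomes outcome-fixed from some bounded point onward — in fact immediately, i.e. $\x$ has the same class in $\M^j$ for all $j\ge i+1$. The mechanism: the outcome of $\x$ in $\M^{j}$ (equivalently, whether $\x\in\M^{j+1}$) depends only on the moves of $\M^{j}$ that are $\preceq\x$, i.e. on $P(\M^{j-1})$ restricted to $\{\y:\y\preceq\x\}$, which by the recursive characterization depends only on the outcome classes of positions $\preceq \x$ in $\M^{j-1}$. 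So if all positions strictly below $\x$ have settled by stage $i$, then the move-set of $\M^j$ below $\x$ is the same for all $j>i$, hence the outcome of $\x$ in $\M^j$ is the same for all $j>i$. Thus the least position (in a linear extension of $\preceq$ restricted to any initial segment) that still differs between two consecutive games gets fixed forever; since there is no infinite strictly decreasing chain in $\N_0^d$ below any fixed $\x$, a straightforward induction on the well-founded order $\preceq$ shows every position is eventually fixed. Combining the per-position limits gives the limit game $\M^\infty$, and Lemma~\ref{lem:1} (or directly, the definition) gives that $\M^\infty$ is reflexive.

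One subtlety I would be careful about: "eventually fixed" must be made uniform enough to conclude a genuine pointwise limit — but that is automatic, since the limit is taken coordinatewise (position-by-position) and each position individually stabilizes. A second subtlety is that the recursion "outcome of $\x$ depends only on positions $\preceq\x$" must be stated cleanly: in a vector subtraction game an option of $\x$ is some $\y$ with $\x-\y\in\M$, so $\y\preceq\x$, and whether $\y$ is a P- or N-position in turn depends only on options of $\y$, which are $\preceq\y\preceq\x$; by well-founded induction the whole computation of $\x$'s outcome in $\M^{j}$ uses only $P(\M^{j-1})\cap\{\z:\z\preceq\x\}$, and inductively only the outcomes in $\M^{j-1}$ of positions $\preceq\x$. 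I expect \textbf{the main obstacle} to be packaging this double induction cleanly — one induction on the partial order $\preceq$ (to propagate "fixed below $\x$" up to $\x$) nested with the bookkeeping across the iteration index $i$ — and making sure the base case is right, namely that $T_\M\cup\min(\M)$ is outcome-fixed from the start (Observations~\ref{obs:1} and \ref{obs:2}, together with Lemma~\ref{lem:mincond}, handle this: terminals stay terminal, the minimal move stays a P-position forever).

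Once pointwise stabilization is in hand, the limit set $\M^\infty:=\{\x : \x\in\M^i \text{ for all sufficiently large } i\}$ is well defined, and reflexivity follows: applying $\star$ to $\M^\infty$ computes outcomes using only finitely much data below each $\x$, all of which already equals its eventual value in the sequence, so $P(\M^\infty)$ agrees with $\M^\infty$ position-by-position, i.e. $(\M^\infty)^\star=\M^\infty$.
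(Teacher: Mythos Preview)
Your proposal is correct and follows essentially the same strategy as the paper: both arguments establish pointwise stabilization by showing that once everything strictly below a position $\x$ has settled, $\x$ itself settles at the next stage. The paper packages this slightly differently, taking the set $X(i)$ of minimal elements of the symmetric difference $\M^i\triangle\M^{i+1}$ and doing an explicit two-case analysis (Case~1: $\x\in\M^i\setminus P(\M^i)$; Case~2: $\x\in P(\M^i)\setminus\M^i$), producing in each case a concrete witnessing move that forces $\x$'s status to persist; you instead phrase the same step as a clean well-founded induction on $\preceq$, observing that the outcome computation at $\x$ reads only data strictly below $\x$. One small point you flag but do not fully resolve: whether $\x$ itself lies in $\M^j$ affects the option set (it adds the option $\0$), so the dependence is on moves $\preceq\x$, not $\prec\x$; this is harmless because $\0$ is always an N-position and because Lemma~\ref{lem:mincond} pins down the case where $\x$ has no strictly smaller move, but it is worth saying explicitly. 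The paper's case split sidesteps this by arguing directly with a move $\m\prec\x$ in each case.
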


\begin{proof}
If either $\M=\varnothing$ or $\0\in\M$, then by Observation~\ref{obs:Pempty}, $\M^\infty$ exists. 

Now let $\varnothing\neq\M\subseteq\nn{d}$. Assume that for some $i\geqslant 0$, $$(\M^i\setminus P(\M^i))\cup (P(\M^i)\setminus \M^i)\neq \varnothing,$$ since otherwise  $\M^0=\M^1= \M^\infty$ (by the definition of the star-operator). 
Let $$X(i) = \min((\M^i\setminus P(\M^i))\cup (P(\M^i)\setminus \M^i))$$ be the set of minimal differing elements among moves and P-positions at the $i^{{\rm th}}$ iteration. 
Note that by definition of $X(i)$, if $\z \not\succeq \x$ for all $\x\in X(i)$, then $\z \in (\M^i\cap P(\M^i))\cup ((\M^i)^c\cap (P(\M^i))^c)$, so either  $\z\in \M^j$ for all $j>i$ or $\z\notin \M^j$ for all $j>i$. Now for each $\x\in X(i)$, we consider the following two cases:

Case 1: Suppose $\x\in \M^{i}\setminus P(\M^{i})$.  It suffices to show that $\x\not\in P(\M^{i+1})$, since then $\x\not\in \M^j$ for all $j>i$. Note that $\x$ is not a terminal position because $\x$ is a move. Also, because $\x$ is not a P-position, there is a move  $\m\in\M^i$ such that 
\begin{align}\label{eq:1}
\x - \m = \z\in P(\M^{i}).
\end{align}
However, since $\0\prec\m, \z \prec\x$, then, by definition of $X(i)$, $\m\in\M^{i+1}$ and $\z\in P(\M^{i+1})$, which, by equation~(\ref{eq:1}), implies that $\x\not\in P(\M^{i+1})$, as desired.\footnote{An example of this case is $x=12\in X(1)$ in Figure~\ref{fig:conv}.}

Case 2: Suppose that $\x\in P(\M^{i})\setminus \M^{i}$. It suffices to show that $\x\in P(\M^{i+1})$, since then $\x\in \M^j$ for all $j>i$. Let's assume to the contrary that $\x\not\in P(\M^{i+1})$. Then there exists a move $\m\in\M^{i+1}$ such that $\x - \m = \z\in P(\M^{i+1})$. But then $\m\in P(\M^i)=\M^{i+1}$, by definition of the $\star$-operator, and $\z\in \M^i\cap P(\M^i)$, by definition of $X(i)$. Therefore, in the game $\M^i$ we have a move $\z$ from a P-position $\x$ to the P-position $\m$, a contradiction, so $\x\in P(\M^{i+1})$.\footnote{Examples of this case are $x=5\in X(0)$, $x=16\in X(2)$, and $x=48\in X(3)$ in Figure~\ref{fig:conv}.} 
\end{proof}

 We now characterize reflexive games via a ``sum-set'' property.

\begin{defn}
 Suppose that $A, B\subseteq \NN{d}$. Then $A+B=\{{\boldsymbol a}+{\boldsymbol b}\mid {\boldsymbol a}\in A, {\boldsymbol b}\in B\}$.   
\end{defn}

\begin{thm}\label{thm:sumset}
Let $A\subseteq \N_0^d$. Then the game $\A$ with move set $A$ is reflexive if and only if 
\begin{align}\label{eq:sumset}
A+A=A^c\setminus T_\A,
\end{align}
where $A^c$ denotes the complement of $A$ with respect to $\N_0^d$.
\end{thm}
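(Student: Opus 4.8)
The plan is to rewrite reflexivity as the single identity $A=P(\A)$ (Definitions~\ref{def:refl} and \ref{def:star}) and then match the sum-set condition against the misère outcome rules recalled above: $\x\in N(\A)$ iff $\x$ is terminal or $\x$ has an option in $P(\A)$, and $\x\in P(\A)$ iff $\x$ is non-terminal and every option of $\x$ lies in $N(\A)$. I will use throughout that the options of $\x$ are exactly the positions $\x-\m$ with $\m\in A$ and $\m\preceq\x$.

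For the direction ``reflexive $\Rightarrow$ (\ref{eq:sumset})'', assume $A=P(\A)$, so $A^c=N(\A)$. Given $\m_1,\m_2\in A$ and $\x=\m_1+\m_2$, the position $\x-\m_2=\m_1$ is an option of $\x$ lying in $A=P(\A)$; hence $\x$ is a non-terminal N-position, i.e. $\x\in A^c\setminus T_\A$. Conversely, if $\x\in A^c\setminus T_\A$ then $\x\in N(\A)$ and $\x$ is non-terminal, so by the misère characterization some option $\x-\m$ (with $\m\in A$) lies in $P(\A)=A$, and then $\x=\m+(\x-\m)\in A+A$. Thus $A+A=A^c\setminus T_\A$.

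For the direction ``(\ref{eq:sumset}) $\Rightarrow$ reflexive'', I first note that (\ref{eq:sumset}) forces $\0\notin A$: otherwise $\0=\0+\0\in A+A$ while $\0\notin A^c$. Hence every $\m\in A$ satisfies $\m\succ\0$, so $\x-\m\prec\x$; the game is acyclic and outcomes are determined by $\preceq$-induction, which is legitimate since $(\N_0^d,\preceq)$ is well-founded. I would then prove $\x\in A\iff\x\in P(\A)$ by induction on $\x$. If $\x\in A$, then $\x-\x=\0$ is an option, so $\x\notin T_\A$; moreover no option $\x-\m$ ($\m\in A$) can lie in $A$, since that would put $\x\in A+A=A^c\setminus T_\A$, contradicting $\x\in A$, so by the induction hypothesis every option of $\x$ lies in $N(\A)$, whence $\x\in P(\A)$. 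If $\x\notin A$, then either $\x\in T_\A$, so $\x\in N(\A)$ by Observation~\ref{obs:1}, or $\x\in A^c\setminus T_\A=A+A$, say $\x=\m+\m'$ with $\m,\m'\in A$, in which case $\x-\m=\m'$ is an option of $\x$ lying in $P(\A)$ by the induction hypothesis, so again $\x\in N(\A)$. Either way $\x\notin P(\A)$. This gives $A=P(\A)$, i.e. $\A$ is reflexive. (The degenerate case $A=\varnothing$ is absorbed into this induction, and when $\0\in A$ neither side of the equivalence can hold, by Observation~\ref{obs:Pempty}.)

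I expect the only delicate point to be setting up the $\preceq$-induction correctly — in particular observing that hypothesis (\ref{eq:sumset}) itself rules out the pass move $\0\in A$, which is precisely what makes the recursion well-founded — together with keeping the bookkeeping straight between ``$\x$ is a move'' and ``$\x$ is a P-position''. Both implications are otherwise routine translations between the sum-set identity and the misère P/N rules.
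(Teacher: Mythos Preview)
Your proof is correct. The forward direction is essentially the paper's: both arguments amount to checking the general identity $A+P(\A)=N(\A)\setminus T_\A$ and then specializing via $A=P(\A)$ (the paper states this identity explicitly as an intermediate equation, you unpack it inline).

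The converse differs genuinely. The paper keeps the identity $A+P(\A)=N(\A)\setminus T_\A$ in hand and argues globally: setting $B=P(\A)$, it first shows $B\subseteq A$ by assuming some $\x\in A^c\cap B$ and deriving a contradiction with the existence of a minimal move, and then shows $B^c\subseteq A^c$ using the identity again. You instead run a direct well-founded $\preceq$-induction proving $\x\in A\iff\x\in P(\A)$ position by position, after first extracting $\0\notin A$ from \eqref{eq:sumset} to make the recursion terminate. Your route is more elementary and self-contained, and it handles the degenerate cases ($A=\varnothing$, $\0\in A$) uniformly rather than by separate case analysis; the paper's route has the side benefit of isolating the reusable identity $A+P(\A)=N(\A)\setminus T_\A$, valid for any game with $\0\notin A$.
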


\begin{proof} If $A = \varnothing$, then all positions  are terminal N-positions, so $T_{\A} = \N^d_0$ and $P(\A)=\varnothing=A$.  Thus $\A$ is reflexive and \eqref{eq:sumset} holds. 

Next we assume that $A$ is non-empty. If $\0\in A$, then because $\0\not\in P(\A)$, $\A$ is not reflexive. On the other hand, $\0 \in A+A $, but $\0\not\in\A^c$, so \eqref{eq:sumset} does not hold and the claim is true in this case also. 

Now we assume that $A$ is non-empty and $\0\not\in A$. Note that for any such game $\A$, we have that for any non-terminal position $\x \in N(\A)\setminus T_\A$ there is a move $\m=\x-\z \in A$ that leads to a P-position $\z\in P(\A)$. Therefore, $ N(\A)\setminus T_\A \subseteq A+P(\A)$.

On the other hand, since a move from a P-position cannot result in a  P-position, for any $\z\in P(\A)$ and any move $\m \in A$, $\m+\z=\x \in N(\A)\setminus T_\A$. Thus, $A+P(\A) \subseteq N(\A)\setminus T_\A$, and we have
\begin{align}\label{eqforgen}
A+P(\A) = N(\A)\setminus T_\A. 
\end{align}

We now prove that $\A$ is reflexive if and only if~\eqref{eq:sumset} holds.  \\
``$\Rightarrow$"  If $A$ is a reflexive, then $P(\A)=A$ and $N(\A) =A^c$, so \eqref{eqforgen} reduces to \eqref{eq:sumset}.\\
``$\Leftarrow$"   Let $B=P(\A)$, so we need to prove that $B=A$. Assume to the contrary that there is $\x\in A^c\cap B$.  Then $B\subseteq A^c$, because otherwise, there would exist  $\z\in A\cap B$ and, by \eqref{eq:sumset}, a move $\m = \x - \z\in A$ from P-position $\x\in A^c\cap B\subset A^c\setminus T_\A$ to P-position $\z\in A\cap B$. So, $B \subseteq A^c$, or equivalently, $ A  \subseteq B^c=N(\A)$. However, in mis\`ere-play, a smallest move (which exists by assumption) is always a P-position, which contradicts that $A \subseteq N(\A)$, and so $A^c \cap B = \varnothing$. Therefore,   $B \subseteq A$.

It remains to prove that $A \subseteq B$,  or equivalently,  $B^c \subseteq A^c$. Let $\x \in B^c$. Note that $T_{\A} \subseteq A^c\cap B^c$ because terminal positions are neither moves nor P-positions. Thus,  we assume without loss of generality that $\x \in B^c\setminus T_\A$, that is, $\x$ is a non-terminal N-position. By  \eqref{eqforgen}, there is a move $\m=\x-\z \in A$ from $\x$ to $\z \in B \subseteq A$. Since both  $\z$ and $\m$ are in $A$,  then by assumption \eqref{eq:sumset} we have that  $\m+\z =\x \in A^c\setminus T_\A$. Since $\x\notin T_\A$, we must have that $\x \in A^c$, which completes the proof.
\end{proof}

Using the sum-set property of Theorem~\ref{thm:sumset}, we completely characterize the limit games.  There is exactly one reflexive limit game for each set of minimal moves, that is, the set of minimal elements uniquely determines the limit game.

\begin{thm}\label{thm:general}
Let $\M$ and $\G$ be non-empty games. Then $\M^\infty =\G^\infty \iff \min(\M)=\min(\G)$.
\end{thm}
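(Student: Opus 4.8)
The plan is to prove the two directions separately, using Theorem~\ref{thm:sumset} to pin down the limit game explicitly from $\min(\M)$.

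For the implication ``$\Rightarrow$'', I would argue contrapositively. Suppose $\min(\M)\ne\min(\G)$. By Lemma~\ref{lem:mincond}, $\min(\M^\infty)=\min(\M)$ and $\min(\G^\infty)=\min(\G)$ (the cases $\0\in\M$ or $\M=\varnothing$ are handled by Observation~\ref{obs:Pempty}, and since the games are non-empty we may assume $\0\notin\M,\G$). Hence $\M^\infty$ and $\G^\infty$ have different minimal-element sets, so in particular $\M^\infty\ne\G^\infty$. This direction is short.

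The substance is ``$\Leftarrow$'': if $\min(\M)=\min(\G)=:S$ with $\0\notin S$, then $\M^\infty=\G^\infty$. By Theorem~\ref{thm:conv} both limits exist and are reflexive, so by Theorem~\ref{thm:sumset} each limit game $\A=\M^\infty$ (resp.\ $\G^\infty$) satisfies the sum-set equation $A+A=A^c\setminus T_\A$. The key point is that $T_\A$ depends only on $\min(\A)=S$ (by definition $T_\A=\{\x\not\succeq\m\mid \m\in S\}$, since being $\succeq$ some move is the same as being $\succeq$ some minimal move), so both limit games have the same terminal set $T:=T_S$. I would then show that the sum-set equation, together with $\min(A)=S$, determines $A$ uniquely. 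The natural way to do this is to set up a recursion on the partial order: I claim $A$ is forced level by level. Fix a linear extension of $\preceq$ (or induct on $\sum x_i$). The terminal positions are not in $A$. The minimal elements of $A^c\setminus T$ must be exactly $S$ by the minimality hypothesis, and for each such $\m\in S$ we have $\m\in A$; conversely each $\m\in S$ is not in $A+A$ since $\0\notin A$ forces every element of $A+A$ to be $\succeq$ some element of $S$ that is strictly below it, impossible for a minimal element of $A^c\setminus T$. More generally, for any $\x\notin T$, membership $\x\in A$ is equivalent to $\x\notin A+A$, i.e.\ $\x$ cannot be written as $\boldsymbol a+\boldsymbol b$ with $\boldsymbol a,\boldsymbol b\in A$ both $\prec\x$ (using $\0\notin A$); since $A\cap\{\y\prec\x\}$ has already been determined by induction, the status of $\x$ is forced. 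Hence $A$ is the unique set with $\min(A)=S$ satisfying~\eqref{eq:sumset}, and therefore $\M^\infty=\G^\infty$.

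I expect the main obstacle to be making the uniqueness-by-recursion argument airtight, in particular verifying the equivalence ``$\x\in A \iff \x\notin(A+A)$'' for non-terminal $\x$ directly from~\eqref{eq:sumset} rather than circularly. The care needed is: from~\eqref{eq:sumset}, $\x\in A^c\setminus T$ iff $\x\in A+A$; and for $\x\notin T$, $\x\in A^c$ iff $\x\notin A$; combining gives $\x\notin A \iff \x\in A+A$ for all $\x\notin T$, which is exactly what lets the induction go through once one notes that any representation $\x=\boldsymbol a+\boldsymbol b$ with $\boldsymbol a,\boldsymbol b\in A$ has $\boldsymbol a,\boldsymbol b\prec\x$ because $\0\notin A$. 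A secondary point worth stating cleanly is that $T_\A$ really is a function of $\min(\A)$ alone, which is immediate but should be remarked. Everything else is bookkeeping with the partial order.
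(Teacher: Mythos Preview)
Your proposal is correct and follows essentially the same route as the paper. Both directions match: for ``$\Rightarrow$'' the paper also uses Lemma~\ref{lem:mincond} (directly rather than contrapositively), and for ``$\Leftarrow$'' the paper also applies the sum-set characterization of Theorem~\ref{thm:sumset} to show that a reflexive game is determined by its set of minimal moves --- the only cosmetic difference is that the paper phrases the uniqueness argument as a minimal-counterexample (take $\x$ minimal in the symmetric difference $\G^\infty\triangle\M^\infty$ and derive a contradiction from $\x=\y+\z$ with $\y,\z\prec\x$), whereas you phrase it as an explicit forward induction on the partial order; the key step ``for $\x\notin T$, $\x\in A\iff\x\notin A+A$, and any decomposition uses only elements $\prec\x$'' is identical in both.
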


\begin{proof}
``$\Rightarrow$"  If $\M^\infty = \G^\infty=\varnothing$, then by Observation~\ref{obs:Pempty},  $\{\0\}=\min(\M)= \min(\G)$ since both $\M$ and $\G$ are non-empty. If $\M^\infty$ and $ \G^\infty$ are non-empty games, then by Observation~\ref{obs:2}, $\0 \not\in \M \cap \G$, and by Lemma~\ref{lem:mincond},  we have  $\min(\M)=\min(\M^\infty)=\min(\G^\infty)=\min(\G)$ as claimed. 

``$\Leftarrow$" 
 If $\{\0\} = \min(\M) = \min(\G)$, then $\M^\infty=\G^\infty=\varnothing$ by Observation~\ref{obs:Pempty}. If $\{\0\} \neq \min(\M)=\min(\G)$, then by Lemma~\ref{lem:mincond},  $ \min(\M^\infty)=\min(\G^\infty)$ and $T_{\M^\infty}=T_{\G^\infty}$. We need to show that $\M^\infty=\G^\infty$. Assume to the contrary that there is a smallest differing element $$\x=\min(\G^\infty\setminus \M^\infty\cup \M^\infty \setminus \G^\infty).$$ 
 Without loss of generality we may assume that $\x\in \G^\infty\setminus \M^\infty$. Be definition of $\x$, $\x \not\in \M^\infty$. Also, $\x \succeq \m \in \min(\G^\infty)=\min(\M^\infty)$, so $\x \not \in T_{\M^\infty}$, that is, $\x \in (\M^\infty)^c\setminus T_{\M^\infty}$. Since $\M^\infty$ is reflexive, by  Theorem~\ref{thm:sumset}, there must be $\0\neq \y, \z \in\M^\infty$ such that $\y+\z=\x$. However, since $\y,\z \prec \x$, by minimality of $\x$, we have $\y,\z \in \G^\infty$. Applying  Theorem~\ref{thm:sumset} to $\G^\infty$ now implies that $\x \in (\G^\infty)^c\setminus T_{\G^\infty}$, a contradiction. Thus $\M^\infty=\G^\infty$.
\end{proof}

 Theorems~\ref{thm:conv} and~\ref{thm:general} confirm what was suggested in Figure~\ref{fig:oneDim}; the games converge to the same limit game. Now the question becomes: what do limit games `look like'? We will completely answer this question in the next section for games on one heap, and then in the final section, we sketch some of the observed behavior for two heaps (see also \cite{CDHL}).

Both the mis\`ere-play $\star$-operator and the normal-play $\star\star$-operator converge in any dimension, but the properties of the fixed points are not the same.  Our results imply that the mis\`ere-play convergence is \emph{stable} in the following sense.

\begin{cor}
Let $\M$ be a reflexive game in any dimension, and let $Y$ be a finite set of vectors in the same dimension. For almost all \emph{perturbations} of the form $\M_Y = (\M\setminus Y) \cup (Y\setminus \M)$, then $\M^\infty = {\M_Y}^\infty $. 
\end{cor}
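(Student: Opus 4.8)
The plan is to reduce the statement to Theorem~\ref{thm:general} by showing that ``almost all'' finite perturbations $Y$ leave $\min(\M)$ unchanged. First I would make precise what ``almost all perturbations'' means: we fix a dimension $d$ and a reflexive game $\M$, and quantify over finite sets $Y\subseteq\N_0^d$; the claim is that all but finitely many such $Y$ (equivalently, all $Y$ avoiding a certain finite ``bad'' set of vectors) satisfy $\M_Y^\infty=\M^\infty$. Since $\M$ is reflexive and non-trivial (the case $\0\in\M$ or $\M=\varnothing$ is degenerate and handled separately via Observation~\ref{obs:Pempty}), we have $\0\notin\M$ and $\min(\M)\neq\{\0\}$.

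The key step is the following observation: $\min(\M_Y)=\min(\M)$ whenever $Y$ is disjoint from the lower ideal $D_\M=\{\x : \x\preceq\m \text{ for some } \m\in\min(\M)\}$ and also does not introduce $\0$. Indeed, if $Y\cap D_\M=\varnothing$ and $\0\notin Y$, then removing elements of $Y$ from $\M$ cannot delete any minimal move (minimal moves lie in $D_\M$), and adding elements of $Y\setminus\M$ cannot create a new minimal move below an existing one (since any such new element would lie in $D_\M$) nor can it add $\0$; also $\0\notin\M$ gives $\0\notin\M_Y$. Hence $\min(\M_Y)=\min(\M)$, so $\M_Y$ is non-empty with $\0\notin\min(\M_Y)$, and Theorem~\ref{thm:general} yields $\M_Y^\infty=\M^\infty$. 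The set $D_\M$ is finite (it is the down-set of the finite antichain $\min(\M)$, and each coordinate is bounded), so ``$Y\cap D_\M=\varnothing$ and $\0\notin Y$'' excludes only finitely many vectors from membership in $Y$; thus all but finitely many finite $Y$ qualify, which is the intended meaning of ``almost all perturbations.''

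I would then assemble these pieces: (i) dispose of the degenerate reflexive games ($\varnothing$ and anything containing $\0$ — but a reflexive game never contains $\0$ by the argument in Theorem~\ref{thm:sumset}, so only $\M=\varnothing$ remains, where $\M_Y^\infty=\varnothing$ as soon as $\0\notin Y$, again all but finitely many $Y$); (ii) for the main case invoke the observation above; (iii) conclude via Theorem~\ref{thm:general}. The main obstacle is purely expository rather than mathematical: pinning down the quantifier in ``almost all perturbations'' so that the finiteness of the exceptional set is unambiguous. Once $D_\M$ is identified as the relevant finite obstruction set, the proof is a short consequence of Lemma~\ref{lem:mincond} and Theorem~\ref{thm:general}; no new combinatorial work on the $\star$-operator is needed.
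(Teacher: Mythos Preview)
Your overall strategy---reduce to Theorem~\ref{thm:general} by arguing that most perturbations preserve $\min(\M)$---is exactly the paper's (the paper's proof is the single sentence ``This is a consequence of Theorems~\ref{thm:conv} and~\ref{thm:general}''). So the approach is right.

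However, there is a genuine gap in your execution. You claim that if $Y\cap D_\M=\varnothing$ (and $\0\notin Y$) then $\min(\M_Y)=\min(\M)$, where $D_\M$ is the down-set of $\min(\M)$. You argue that an added vector $\y\in Y\setminus\M$ ``cannot create a new minimal move below an existing one''---true---but you overlook the possibility that $\y$ is \emph{incomparable} to every element of $\min(\M)$. Such a $\y$ lies in $T_\M\setminus D_\M$, and adjoining it produces a new minimal element of $\M_Y$. For instance, in dimension $2$ with $\min(\M)=\{(1,1)\}$ and $Y=\{(5,0)\}$, we have $Y\cap D_\M=\varnothing$ yet $\min(\M_Y)=\{(1,1),(5,0)\}\ne\min(\M)$, so $\M_Y^\infty\ne\M^\infty$ by Theorem~\ref{thm:general}. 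The correct obstruction set is $\min(\M)\cup T_\M$, not $D_\M$; and $T_\M$ is infinite whenever some coordinate axis contains no minimal move (which is generic in dimension $\ge 2$). Consequently your proposed precise reading of ``almost all''---that the bad vectors form a finite set---fails outside dimension~$1$. The paper leaves ``almost all'' informal; if you want to make it rigorous in higher dimensions you will need a density formulation (e.g., the proportion of bad $Y$ inside growing boxes tends to $0$), not a finiteness claim.
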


\begin{proof}
This is  a consequence of Theorems~\ref{thm:conv} and \ref{thm:general}. 
\end{proof}

\section{A characterization of limit games in one dimension}\label{sec:oneheap}

We first consider $d=1$, that is, play on a single heap.  Motivated by the structure of the limiting game in Figure~\ref{fig:exgame}, for any  $k\in \N$,  we define the period $p_k=3k-1$ and let ${\M}_k$ denote the set 
\begin{align*}
\M_k &= \{i p_k+k, \ldots, i p_k+2k-1 \mid i\in \N_0\},
\end{align*}
with  $\M_0=\varnothing$. Note that $k=\min(\M_k)$ for $k \geq 1$. 
By Theorem~\ref{thm:general}, the games in Example~\ref{ex:1Dlim} have the same limit game, and  we will see in Theorem~\ref{thm:onedim} and Corollary~\ref{cor:limit}, that $\H^\infty=\G^\infty=\M_4$.

Since the set $\M_k$ is periodic with period $p_k$, we find it convenient to make our arguments in arithmetic modulo~$p_k$. We denote the set of residuals modulo~$p$ of elements of a set $A$ by $[A]_p$. With this notation, it follows from the definition of $\M_k$ that for $k \geq 1$,
\begin{align}\label{eq:Mk_res}
[\M_k]_{p_k}&=\{k, k+1, \ldots, 2k-1\} \textnormal{    and    } \\ 
 [\M_k^c]_{p_k}&=\{0, 1, \ldots, k-1, 2k, \ldots, 3k-2\} \equiv_{p_k}\{-(k-1),\ldots,k-1\}.\notag
\end{align}

\begin{thm}\label{thm:onedim}
The game $\M\subseteq \N_0$ is reflexive if and only if  $\M={\M}_k$, for some $k\in \N_0$.
\end{thm}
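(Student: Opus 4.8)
The plan is to prove both directions using the sum-set characterization of Theorem~\ref{thm:sumset}. The easy direction is to show each $\M_k$ is reflexive, and the harder direction is to show these are the only reflexive games on one heap.

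For the forward-constructing direction ($\M_k$ is reflexive), I would handle $k=0$ separately (where $\M_0=\varnothing$ is reflexive by Theorem~\ref{thm:sumset}) and then fix $k\ge 1$. By Theorem~\ref{thm:sumset} it suffices to verify $\M_k+\M_k=\M_k^c\setminus T_{\M_k}$, where $T_{\M_k}=\{0,1,\dots,k-1\}$ since $k=\min(\M_k)$. Because $\M_k$ is periodic with period $p_k=3k-1$, I would work modulo $p_k$ using \eqref{eq:Mk_res}: I need to check that adding two residues from $\{k,\dots,2k-1\}$ lands (modulo $p_k$) in $\{2k,\dots,3k-2\}\cup\{0,\dots,k-1\}$, i.e. in $\{-(k-1),\dots,k-1\}$, and conversely that every such residue is so obtained. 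The sum of two elements of $\{k,\dots,2k-1\}$ ranges over $\{2k,\dots,4k-2\}$; reducing mod $p_k=3k-1$, the part $\{2k,\dots,3k-2\}$ stays put and the part $\{3k-1,\dots,4k-2\}$ maps to $\{0,\dots,k-1\}$, so the residue set of $\M_k+\M_k$ is exactly $\{2k,\dots,3k-2\}\cup\{0,\dots,k-1\}=[\M_k^c]_{p_k}$. The one genuine subtlety is the interaction with terminal positions and with ``small'' sums: I must check that the actual integers $2k,\dots,3k-2$ (which lie in $\M_k^c$ but are $\ge k$, hence not terminal) are all realized as $a+b$ with $a,b\in\M_k$ — e.g. $k+k=2k$ up to $k+(2k-2)=3k-2$ — and that no element of $\M_k+\M_k$ is terminal (true since the smallest element of $\M_k+\M_k$ is $2k\ge k$), and that every non-terminal element of $\M_k^c$, including those in higher periods, is hit; the periodicity argument plus the base-period check closes this.

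For the converse (a reflexive $\M\subseteq\N_0$ must equal some $\M_k$), let $\M$ be reflexive. If $\M=\varnothing$ then $\M=\M_0$; if $\0\in\M$ then by Observation~\ref{obs:Pempty} $P(\M)=\varnothing\ne\M$, contradicting reflexivity, so assume $\0\notin\M$ and let $k=\min(\M)\ge 1$; then $T_\M=\{0,\dots,k-1\}$. I would now show $\M=\M_k$ by strong induction on the elements of $\N_0$, using \eqref{eq:sumset}: $A+A=A^c\setminus T_\M$. The positions $0,\dots,k-1$ are terminal (not in $\M$, agreeing with $\M_k$), and $k,\dots,2k-1$ must all be in $\M$: the smallest element $k$ is in $\M$ by Observation~\ref{obs:2}, and for $k<x<2k$, $x$ cannot be written as $a+b$ with $a,b\in\M$ since $a,b\ge k$ forces $a+b\ge 2k>x$, so $x\notin A^c\setminus T_\M$, i.e. $x\in\M$. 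Next $2k,\dots,3k-2$ cannot be in $\M$: such $x$ does equal $k+(x-k)$ with both summands in $\{k,\dots,2k-1\}\subseteq\M$, so $x\in A+A\subseteq A^c$. And $3k-1=p_k$ must be in $\M$: it is not expressible as a sum of two elements of $\M$ that have all been determined so far to be $\le 3k-2$, because the only candidates are $a,b\in\{k,\dots,2k-1\}$ summing to at most $4k-2$, but a sum equal to $3k-1$ needs $a+b=3k-1$ with $a,b\le 2k-1$, forcing both $\ge k$, giving $a+b\ge 2k$; I must rule out $a+b=3k-1$ — this requires $a=k,b=2k-1$ or symmetric, which IS possible, so actually $p_k$ might be forced OUT. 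Here is the real crux, and I expect this to be the main obstacle: disentangling exactly which sums are available requires knowing $\M$ up through $p_k$ first, and the argument must be set up as an induction where at stage $n$ I know $\M\cap\{0,\dots,n-1\}$ and determine membership of $n$ via \eqref{eq:sumset}, checking it matches $\M_k$; the periodicity of $\M_k$ then must be re-derived, not assumed. I would organize the induction so that the inductive hypothesis is ``$\M\cap[0,n)=\M_k\cap[0,n)$'', and at each step observe that $n\in\M \iff n\notin (A+A)\cup T_\M$, where the relevant sums $a+b=n$ all have $a,b<n$ hence are governed by the hypothesis; a short case analysis on $n\bmod p_k$ (is it in $[k,2k-1]$, in $[0,k-1]\cup[2k,3k-2]$, exactly $\equiv 0$, etc.) then shows membership agrees with $\M_k$. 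The delicate cases are $n\equiv k\pmod{p_k}$ with $n>k$ (must show $n$ is NOT a sum, despite the reflexive structure now providing many summands) and $n\equiv 0\pmod{p_k}$ with $n=p_k$ versus $n=2p_k$, where I need the inductively-built lower periods to supply the representation; I anticipate spending most of the proof's effort carefully bounding which pairs $(a,b)$ with $a+b=n$ and $a,b\in\M_k\cap[0,n)$ exist, and showing this matches the pattern. Finally, once $\M$ agrees with $\M_k$ on $[0,p_k]$ and the recurrence $n\in\M\iff n\notin(A+A)\cup T_\M$ is translation-invariant under adding $p_k$ (because $\M_k$ is), a clean induction finishes $\M=\M_k$.
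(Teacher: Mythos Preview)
Your forward direction (each $\M_k$ is reflexive) is essentially identical to the paper's: both invoke Theorem~\ref{thm:sumset} and carry out the same residue computation modulo $p_k$, with the same bookkeeping for terminal positions and for realizing every non-terminal element of $\M_k^c$ as a sum.

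Your converse takes a genuinely different route. The paper argues by contrapositive using the \emph{P-position} characterization directly: assuming $\M\ne\M_k$ with $k=\min(\M)$, it takes the smallest differing element $x$ and, leaning on the already-established reflexivity of $\M_k$ (so that $P(\M_k)=\M_k$ and the two games agree below $x$), shows that $x$ itself witnesses $P(\M)\ne\M$ via a short option analysis. You instead propose a direct strong induction using the sum-set identity \eqref{eq:sumset}, determining membership of each $n$ from $\M\cap[0,n)$. This works because every representation $n=a+b$ with $a,b\in\M$ forces $a,b\ge k$, hence $a,b<n$, so the induction is well-founded; the residue argument you sketch (sums of residues in $\{k,\dots,2k-1\}$ land exactly in $\{0,\dots,k-1\}\cup\{2k,\dots,3k-2\}$ mod $p_k$) then settles each case. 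What the paper's approach buys is economy: once $\M_k$ is known reflexive, the option analysis at the single point $x$ replaces your full case split over $n\bmod p_k$. What your approach buys is uniformity: you never switch back from the sum-set language to game-theoretic options.

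One local slip to clean up: you write ``$3k-1=p_k$ must be in $\M$'' and then talk yourself out of it. In fact $3k-1\notin\M_k$ (the next block of $\M_k$ starts at $p_k+k=4k-1$), and your own computation $3k-1=k+(2k-1)\in A+A$ correctly forces $3k-1\notin\M$, in agreement with $\M_k$. So there is no obstacle there; the ``delicate case'' you flag, $n\equiv k\pmod{p_k}$ with $n>k$, is handled by exactly the residue observation you already made, since $k\notin[\M_k+\M_k]_{p_k}$.
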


\begin{proof}
``$\Leftarrow$" If $k=0$, then  $\M=\M_0=\varnothing$, which is reflexive by Observation~\ref{obs:Pempty}. Suppose next that $\M=\M_k$ is nonempty and let $k=\min(\M_k)\ge 1$. We show that the game $\M_k$ is reflexive using Theorem~\ref{thm:sumset}. Note that by \eqref{eq:Mk_res}, 
$$[\M_k+\M_k]_{p_k}=[\{2k,\ldots,4k-2\}]_{p_k}=\{2k,\ldots,3k-2, 0,\ldots, k-1\}=[\M_k^c]_{p_k}.$$
Since for any element $m\in \M_k$, $m+m  \ge 2k$ and the terminal positions are given by $T_{\M_k}=\{0,\ldots,k-1\}$, we have that $\M_k+\M_k \subseteq \M_k^c\setminus T_{\M_k}$. On the other hand, let $z\in \M_k^c\setminus T_{\M_k}$, so $z= i\cdot p_k+r$ with $r\in [\M_k^c]_{p_k}$. If $0 \le r \le k-1$, then $i \geq 1$ (because $z$ is not a terminal position), and we can write  $z= x+y$ with  $x=(i-1)p_k+ k+ r \in \M_k$ and $y=2k-1 \in \M_k$.
If $2k \le r \le 3k-2$, then $z=x+y$ with $x=i \cdot p_k+k \in \M_k$ and $y=r-k \in \M_k$. Thus $\M_k^c\setminus T_{\M_k}\subseteq \M_k+\M_k$, so $\M_k$ is reflexive by Theorem~\ref{thm:sumset}. 

``$\Rightarrow$" We show that if $\M \neq \M_k$, then $\M$ is not reflexive. Let $k=\min(\M)$. If $k=0$, then $\M\neq \M_\ell$ for any $\ell$, and furthermore, by Observation~\ref{obs:Pempty}, $\M$ is not reflexive. Now assume that $k>0$, so  $k\in P(\M)$ by Observation~\ref{obs:2}. Assume that there is a positive integer $x= \min(\M_k\setminus \M\cup\M \setminus \M_k)$, that is, $x$ is the smallest value that differs between $\M$ and $\M_k$. Necessarily, $x>k$. 

Suppose first that $x\in \M_k\setminus \M$. Because $x \notin \M$, it suffices to prove that $x\in P(\M) = \M^\ms$ to show that $\M$ is not reflexive. Since $x>k$, there exists $y \in \M_k\cap \M \supseteq \{k\}$ such that $y<x$. For any such $y$, $y\in P(\M_k)$ by reflexivity of $\M_k$.  By minimality of $x$,  $y \in P(\M)$ because the same moves are available from $y$ in both $\M$ and $\M_k$. 
Since $x, y \in \M_k$, we have  $x=i \cdot p_k+r$ and $y=j       \cdot p_k+s$ for some $0 \leq j\leq i$ and $k\leq r,s \leq 2k-1$. Thus $z=x - y = (i-j)\cdot p_k+(r-s)$ with $-k+1\leq r-s\leq k-1$, so $z \not\in \M_k$, and by minimality of $x$, $z \not \in \M$. This implies that there is no move in $\M$ from $x$ to a P-position $y$, so $x \in P(\M)$, which completes
this case. 

Suppose next that $x\in \M\setminus \M_k$. It suffices to prove that $x \not\in P(\M)$ to show that $\M$ is not reflexive. By the minimality of $x$, it suffices to find an option $z$ of $x$ with $z\in P(\M)$,  that is  $z=x-y$ for some $y\in \M$. 
Because $y,z <x$, we have $y,z \in \M_k \cap \M$ due to the minimality of $x$. Since $x\not\in \M_k$, $x=i\cdot p_k+r$ for some $i \ge 0$ and $r \in [\M_k^c]_{p_k}$. If $r \in \{0,\ldots, k-1\}$, let $y=(i-1)p_k+(2k-1)\in \M_k$, otherwise choose $y=i\cdot p_k+k \in \M_k$. In each case, $[x - y]_{p_k} \in  [\M_k]_{p_k}$. This shows that there is a move from $x$ to a P-position $z\in P(\M)$, so $x \not\in P(\M)$, which implies that  $\M$ is not reflexive either in this case. 
Overall, the game $\M$ is reflexive if and only if $\M$ is of the form $\M_k$.
\end{proof}

Now that we have identified a family of games that are reflexive, we will show that these games are the only ones that can occur as limit games.

\begin{cor}\label{cor:limit}
Let $\M \subseteq \N_0$ and let $k=\min(\M)$ if $\M \neq \varnothing$, and $k=0$ otherwise. Then $ \lim_{i\rightarrow \infty}{\M}^i = {\M}_k$.
\end{cor}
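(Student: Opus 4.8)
The plan is to combine the three main theorems that precede the corollary. By Theorem~\ref{thm:conv}, the limit game $\M^\infty=\lim_{i\to\infty}\M^i$ exists and is reflexive for every $\M\subseteq\N_0$. By Theorem~\ref{thm:onedim}, every reflexive game on one heap is of the form $\M_k$ for some $k\in\N_0$; hence there is a unique $\ell\in\N_0$ with $\M^\infty=\M_\ell$. It remains to identify $\ell$ with the quantity $k$ defined in the statement (the minimal move of $\M$, or $0$ if $\M=\varnothing$).

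First I would dispose of the degenerate cases. If $\M=\varnothing$, then $k=0$, and by Observation~\ref{obs:Pempty} we have $\M^\infty=\varnothing=\M_0$, so the claim holds. If $\0\in\M$, then again by Observation~\ref{obs:Pempty} $\M^\infty=\varnothing$; but then $k=\min(\M)=0$ and $\M_0=\varnothing$, so the conclusion holds in this case too. (One should note here that the hypothesis "$k=\min(\M)$" with $\0\in\M$ gives $k=0$, consistent with the $\M=\varnothing$ branch.)

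For the main case, assume $\M\neq\varnothing$ and $\0\notin\M$, so $k=\min(\M)\ge 1$. By the $\Leftarrow$ direction of Theorem~\ref{thm:onedim}, the game $\M_k$ is reflexive, hence $\M_k=(\M_k)^\infty$ by Lemma~\ref{lem:1}. Now both $\M$ and $\M_k$ are non-empty games with $\min(\M)=k=\min(\M_k)$ (the latter equality being recorded right after the definition of $\M_k$). Applying Theorem~\ref{thm:general} with $\G=\M_k$ yields $\M^\infty=(\M_k)^\infty=\M_k$, which is exactly $\lim_{i\to\infty}\M^i=\M_k$ as desired.

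There is essentially no obstacle here: the corollary is a direct bookkeeping consequence of Theorems~\ref{thm:conv}, \ref{thm:onedim}, and~\ref{thm:general}, once one has checked that $k=\min(\M_k)$ so that the hypothesis of Theorem~\ref{thm:general} is met. The only point requiring a moment's care is the edge case $\0\in\M$, where one must observe that the definition $k=\min(\M)$ forces $k=0$ and hence $\M_k=\varnothing=\M^\infty$; this keeps the statement uniform across all $\M\subseteq\N_0$.
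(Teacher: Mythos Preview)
Your proof is correct. The paper's own argument is marginally more direct: after observing (as you do) that $\M^\infty=\M_j$ for some $j$ by Theorem~\ref{thm:onedim}, it invokes Lemma~\ref{lem:mincond} to get $\min(\M^\infty)=\min(\M)=k$ and then reads off $j=k$ from $\min(\M_j)=j$. You instead route through Theorem~\ref{thm:general} applied to the pair $(\M,\M_k)$, which is equally valid but slightly heavier, since Theorem~\ref{thm:general} itself rests on Lemma~\ref{lem:mincond}. Either way the content is the same bookkeeping, and your handling of the degenerate cases $\M=\varnothing$ and $0\in\M$ matches the paper's.
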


\begin{proof} Since the limit game is reflexive, Theorem \ref{thm:onedim} applies, and $\M^\infty=\M_j$ for some $j \in\N$. If $\M=\varnothing$ or $0\in\M$, then $\M^\infty = \varnothing = \M_0$, so the claim is true. If $\M$ is nonempty and $0\notin \M$, then by Lemma~\ref{lem:mincond},   $k =\min(\M)=\min(\M^\infty) $. Since $\min(\M_j) = j$ for $j >0$, the minimum uniquely determines $\M_j$,  so we have that $\M^\infty = \M_k$.
\end{proof}

In conclusion,  for $d=1$ we understand the structure of any limit game -- it is periodic and is completely determined by the minimal move. This result is quite surprising in its simplicity, especially since in the case of normal-play, general formulas for limit games are rare in any dimension, the exceptions consisting of a few    `immediately' reflexive game families \cite{LHF11,L12}. 

Now that we have identified the sets $\M_k$ as the only possible limit games, we answer which games have $\M_k$ as their set of P-positions. 

\begin{thm}\label{thm:min1d}
Let $k\in \N$ and $A_{k} = \{k, 2k-1\}$. Then $P(\X) = M_k$ if and only if $A_k\subseteq X\subseteq M_k$. That is, the set of games $\{\X\mid A_k\subseteq X\subseteq M_k\}$ is $M_k$-solvable and also $M_{k}$-complete.

\end{thm}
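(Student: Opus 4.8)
The plan is to prove the two inclusions of the ``iff'' separately, then observe that completeness is an immediate consequence of the solvability characterization. Throughout I would work modulo $p_k = 3k-1$ using the residue description in~\eqref{eq:Mk_res}, since the key combinatorial facts about sums and differences of elements of $\M_k$ are cleanest there.

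First I would prove the ``only if'' direction, i.e. that $P(\X) = \M_k$ forces $A_k \subseteq X \subseteq \M_k$. The inclusion $X \subseteq \M_k$ follows from~\eqref{eqforgen} (the sum-set identity $X + P(\X) = N(\X)\setminus T_\X$): if some $x \in X \setminus \M_k$, then since $k \in P(\X)$ we get $x + k \in N(\X)\setminus T_\X = \M_k^c \setminus T_{\M_k}$; but one checks via residues that $x + k$ being a non-terminal non-P-position together with $x \notin \M_k$ is contradictory — more directly, I would argue that $X\subseteq M_k$ must hold because any move $x$ from a P-position leads to a non-P-position, and run the residue bookkeeping of Theorem~\ref{thm:onedim}'s ``$\Rightarrow$'' argument verbatim. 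For $A_k \subseteq X$: the element $k = \min(\M_k)$ must lie in $X$ since $\min(\X) = \min(P(\X)) = \min(\M_k) = k$ by Lemma~\ref{lem:mincond}. To force $2k-1 \in X$, I would locate a specific P-position whose only escape route to a non-P-position uses the move $2k-1$; the natural candidate is the smallest non-terminal element of $\M_k^c$, namely $3k-1 = p_k$, or an element congruent to $0 \bmod p_k$ in a later block — for such $z$, the decomposition $z = x + y$ with $x, y \in \M_k$ provided in Theorem~\ref{thm:onedim} forces $y = 2k-1$ when $z \equiv 0$. Hence $2k-1$ must be an available move, so $2k-1 \in X$.

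Next I would prove the ``if'' direction: assuming $A_k \subseteq X \subseteq \M_k$, show $P(\X) = \M_k$. I would show by strong induction on $x$ that $x \in P(\X) \iff x \in \M_k$. The terminal positions $\{0,\ldots,k-1\}$ are handled by Observation~\ref{obs:1}. For the inductive step at a position $x \ge k$: if $x \in \M_k$, I must show every move from $x$ lands outside $\M_k$; since moves come from $X \subseteq \M_k$ and $\M_k + \M_k \subseteq \M_k^c\setminus T_{\M_k}$ by the reflexivity computation in Theorem~\ref{thm:onedim}, any option $x - y$ with $y \in X$ satisfies $x = (x-y) + y$, and if $x - y$ were in $\M_k$ then $x \in \M_k + \M_k \subseteq \M_k^c$, a contradiction; so all options are N-positions (non-empty since $k \le y \le x$ is available, as $k\in X$), making $x$ a P-position. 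If $x \in \M_k^c \setminus T_{\M_k}$, I must exhibit one move $y \in X$ with $x - y \in \M_k$; here I use the explicit decompositions from Theorem~\ref{thm:onedim}, which produce $y \in \{2k-1\}$ or $y \in \{k\}$ — both in $A_k \subseteq X$ — landing in $\M_k$, hence in $P(\X)$ by induction. Thus $x$ is an N-position.

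Finally, completeness: by definition $\{\X \mid A_k \subseteq X \subseteq \M_k\}$ being $\M_k$-solvable says every game in it has $P$-set $\M_k$, and the ``only if'' direction says every game $\H$ with $P(\H) = \M_k$ already satisfies $A_k \subseteq H \subseteq \M_k$, hence lies in the set — which is exactly $\M_k$-completeness. The main obstacle I anticipate is the forcing of $2k-1 \in X$ in the ``only if'' direction: unlike $k$, which is pinned down cleanly by the minimum, showing $2k-1$ is \emph{necessary} requires identifying a position whose unique P-to-nonP move has length exactly $2k-1$, and verifying (via the residue classes $[\M_k]_{p_k} = \{k,\ldots,2k-1\}$) that no shorter element of $\M_k$ can substitute — this is where the specific arithmetic of $p_k = 3k-1$ is essential, since it is exactly the modulus that makes $2k-1$ the only way to reach residue $0$ from residue $k$ downward while staying in $\M_k$.
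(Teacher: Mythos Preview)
Your ``if'' direction is correct and essentially the paper's argument: from any $x\in \M_k^c\setminus T_{\M_k}$ one of the two moves $k$ or $2k-1$ reaches $\M_k$, and from any $x\in \M_k$ no move in $X\subseteq \M_k$ can reach $\M_k$ by the sum-set inclusion $\M_k+\M_k\subseteq \M_k^c$.

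The ``only if'' direction, however, has a genuine gap in forcing $2k-1\in X$. Your candidate position $z=p_k=3k-1$ (or more generally any $z\equiv 0\pmod{p_k}$) does \emph{not} isolate the move $2k-1$. Indeed, from $z=3k-1$ every move $m\in\{k,\ldots,2k-1\}$ lands at $3k-1-m\in\{k,\ldots,2k-1\}\subseteq \M_k$; in particular the move $m=k$, which you have already shown lies in $X$, suffices. So nothing about $z=3k-1$ forces $2k-1$ to be available. The fact that the particular decomposition written in the proof of Theorem~\ref{thm:onedim} happens to use $y=2k-1$ when $r=0$ does not mean that decomposition is unique. (Incidentally, $3k-1$ is not the smallest non-terminal element of $\M_k^c$; that is $2k$, which also fails to force $2k-1$ for the same reason.)

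The paper pins down $2k-1$ using $z=4k-2$, which has residue $k-1\pmod{p_k}$, not residue $0$. From $4k-2$ the available moves in $\M_k$ are exactly $\{k,\ldots,2k-1\}$, and $4k-2-m\in\{2k-1,\ldots,3k-2\}$; of these only $2k-1$ lies in $\M_k$, attained precisely when $m=2k-1$. That is the position whose unique escape to a P-position has length $2k-1$. Your concluding paragraph correctly identifies the \emph{nature} of the obstacle, but your proposed witness is wrong; once you replace it by $4k-2$ the argument goes through. Your sketch for $X\subseteq \M_k$ is also rather vague; the clean route (as in the paper) is: if $x\in X\setminus \M_k$ and $x<k$ then $x\in P(\X)\setminus \M_k$, while if $x\ge k$ then the residue description gives $y,z\in \M_k=P(\X)$ with $y-z=x$, so the move $x$ connects two P-positions, a contradiction.
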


\begin{proof} We begin by proving that $P(\A_k) = M_k$. 
Clearly, $T_{\A_k} = \{0,\ldots , k-1\}\subset N(\A_k)$. We compute modulo $p_k = 3k - 1$, and use  (\ref{eq:Mk_res}) to justify that for each $x\in M_k^c\setminus \{0,\ldots , k-1\}$, $x-k\in M_k$ or $x-(2k-1)\in M_k$. Indeed, if $x \in \{0,\ldots , k-1\} \pmod {p_k}$, then $x-(2k-1)\in M_k$, and otherwise $x-k\in M_k$. 
For the other direction we must show that for all $x\in M_k$, both $x-k\in M_k^c$ and $x-(2k-1)\in M_k^c$, and this follows directly by (\ref{eq:Mk_res}). Thus $P(\A_k) = M_k$. 

To prove the statement for a general set $X$ with $A_k\subseteq X\subseteq M_k$, we use  that $P(\M_k) = M_k$  (by Theorem~\ref{thm:onedim}). Hence no move in $\X$ connects any two candidate P-positions in $M_k$. Moreover, since $A_k\subseteq X$, for each candidate N-position we find a move to a candidate P-position using the moves $k$ or $2k-1$.

It remains to prove that  no other sets $X$ have the property $ P(\X)=M_k$, that is, we need to show that if there is $ x\in A_k\setminus X$ or $ x\in X\setminus M_k$, then  $ P(\X) \neq M_k$. Suppose that there is a smallest $x\in A_k\setminus X$, with $M_k = P(\X)$. Then $x=k$ or $x=2k-1$; in the first case, if $k$ is not a move, then $P(\X)=M_k$ implies that  $x<k$ is a terminal N-position, so $k$ as a non-move is also terminal and hence an N-position, a contradiction. Hence assume $k$ is a move, but $2k-1$ is not. Then there is no move from $4k-2\in M_k^c$ to a P-position in $M_k=P(\X)$, contradicting that $M_k$ is the set of P-positions. 

Suppose next that there is a smallest move $x\in X\setminus M_k$ with $P(\X) = M_k$. If $x\in T_{\M_k}$, then $\X$ and $\M_k$ do not have the same P-positions (since $x$ is a P-position in $\X$, but an N-position in $\M_k$). Hence, we must have $x\not\in T_{\M_k}$ and $x \in \{-(k-1),\ldots , k-1\} \pmod {p_k}$. But, for each such $x$, we find two P-positions $y,z\in \{k,\ldots , 2k-1\} \pmod {p_k}$ such that $y-z=x$, which contradicts  $x$ being a move. 
\end{proof}

Given a game $\M$ (in any dimension), we denote the number of iterations of the mis\`ere-play $\star$-operator until the limit game appears for the first time by $\varphi(\M) = \min\{i\mid \M^i =\M^\infty \} \in \N_0\cup \{\infty\}$. For the game $\M=\{k\}$, we derive $\varphi(\M)$. 

\begin{lem}\label{lem:varphi5}
Let $\M=\{k\}$ with $k \geq 2$. Then 
\begin{enumerate}
\item $\M^1 = \{x\mid x\equiv k,\ldots ,2k-1\pmod {2k}\}=[\{ k,\ldots ,2k-1\}]_{2k}$.
\item $\M^2 = \{k,\ldots , 2k-1\}\cup \{4k-1, 6k-1,\ldots\}$.
\item $\M^3= \{k,\ldots , 2k-1\}\cup \{4k-1,\ldots , 5k-2\}\cup \{7k-2, 9k-2,\ldots\}$.
\item $\M^4 = \M_k\cap \{0, \ldots , 10k-3\}$.
\item $\M^5 = \M_k$ for any $k$.
\end{enumerate}
\end{lem}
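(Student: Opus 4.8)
The plan is to prove the five parts of Lemma~\ref{lem:varphi5} by iterating the $\star$-operator starting from $\M = \{k\}$, each time computing the set of P-positions of the previous game, and then to deduce part~5 as a corollary of parts~1--4 together with the convergence and characterization results already established.

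First I would handle part~1. The game $\M = \{k\}$ is the pure subtraction game where one removes exactly $k$ tokens. Its P-positions under mis\`ere play are easy to compute: positions $0,\ldots,k-1$ are terminal N-positions, positions $k,\ldots,2k-1$ have their only option in $\{0,\ldots,k-1\}$, which is a nonempty set of N-positions, hence they are P-positions, positions $2k,\ldots,3k-1$ move only to $k,\ldots,2k-1$ (P-positions), hence are N-positions, and so on with period $2k$. This gives $P(\M) = [\{k,\ldots,2k-1\}]_{2k}$, which is $\M^1$. I would write this as a short induction on the residue block modulo $2k$.

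Next, parts~2, 3, and 4 each amount to one more application of $\star$, i.e.\ computing $P(\M^{i})$ from the explicit description of $\M^{i}$. For part~2 I would take $\M^1$ with its period-$2k$ block structure and run the recursion: near the bottom the terminal positions are $\{0,\ldots,k-1\}$, then $\{k,\ldots,2k-1\}$ are P-positions (smallest moves), and one must track how the larger move values $3k,\ldots$ (i.e.\ the blocks $[\{k,\ldots,2k-1\}]_{2k}$ shifted up) interact. The key bookkeeping device is the one the paper already uses: a position is a P-position iff its option set is nonempty and contains only N-positions. For part~3 I would feed the description of $\M^2$ into the same recursion, and for part~4 feed in $\M^3$; in each case the eventual periodic tail of length one block ($\{k,\ldots,2k-1\}$-type) is pushed further out, the stated "head" part stabilizes, and the arithmetic is modular (compute everything mod $p_k = 3k-1$ once the positions are large enough). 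I expect these three computations to be the bulk of the work but each is a finite, bounded case analysis of residues; the main obstacle is keeping the boundary blocks straight — the place where the "old" period-$2k$ (or intermediate) structure transitions into the emerging period-$p_k$ structure of $\M_k$ — and verifying that the finitely many exceptional small values behave exactly as claimed rather than being off by one block.

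Finally, part~5 follows cleanly from part~4 plus earlier results. By Corollary~\ref{cor:limit}, $\M^\infty = \M_k$ since $\min(\M) = k$. By part~4, $\M^4 = \M_k \cap \{0,\ldots,10k-3\}$, so $\M^4$ and $\M_k$ agree on the initial segment $\{0,\ldots,10k-3\}$. It then suffices to show $P(\M^4) = \M_k$: the P-position status of any position $x$ depends only on the moves available at values $\le x$, and since $p_k = 3k-1$ and $\M_k$ is periodic with period $p_k$, agreement of $\M^4$ with $\M_k$ up through $10k-3$ (which is well more than two full periods past the top of the relevant head) forces, via the same recursion used for $\M_k$ in Theorem~\ref{thm:onedim}, that $P(\M^4)$ is periodic with period $p_k$ and equals $\M_k$ on all of $\N_0$. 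Hence $\M^5 = (\M^4)^\star = \M_k$. I would close by noting this also shows $\varphi(\{k\}) \le 5$ (and typically $=5$), which is the point of the lemma.
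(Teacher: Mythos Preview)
Your plan for parts 1--4 is essentially the paper's own approach: direct case-by-case computation of $P(\M^i)$ from the explicit description of $\M^i$, with the same residue bookkeeping. The paper even cuts part~4 short with ``similar to 3,'' so your level of detail there is, if anything, more than what the authors supply.

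For part~5, however, you miss the clean route and your proposed argument has a gap. The paper's proof of part~5 is one line: since $\M^4 = \M_k\cap\{0,\ldots,10k-3\}$, we have $A_k=\{k,2k-1\}\subseteq \M^4\subseteq \M_k$, and Theorem~\ref{thm:min1d} immediately gives $P(\M^4)=\M_k$, hence $\M^5=\M_k$. You instead argue that agreement of $\M^4$ with $\M_k$ on $\{0,\ldots,10k-3\}$ ``forces'' $P(\M^4)$ to be periodic and equal to $\M_k$. But this does not follow from the observation that the P/N status of $x$ depends only on moves $\le x$: for $x>10k-3$ the move sets $\M^4$ and $\M_k$ genuinely differ (indeed $\M^4$ has \emph{no} moves above $10k-3$), so there is no a~priori reason the P-positions coincide beyond that point. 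Your appeal to ``the same recursion used for $\M_k$ in Theorem~\ref{thm:onedim}'' is also off target, since that proof goes through the sum-set criterion of Theorem~\ref{thm:sumset} rather than a recursion you could transplant to $\M^4$. A periodicity argument for the finite game $\M^4$ \emph{can} be made to work (compute enough of $P(\M^4)$ to see a full period of length $p_k$ repeat over a window of length equal to the largest move $10k-3$), but that is substantially more labor than simply citing Theorem~\ref{thm:min1d}, which was proved precisely to handle this situation.
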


Figure~\ref{fig:conv-k} illustrates Lemma~\ref{lem:varphi5}  for  $\M=\{4\}$.

\begin{figure}[hbt]
\begin{center}
\includegraphics[scale=0.5]{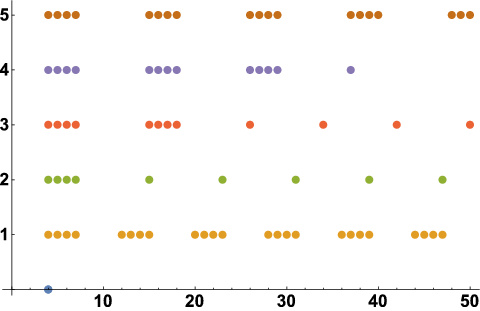}
\end{center}
\caption{The iterations of the mis\`ere-play $\star$-operator for $\M=\{4\}$.}
\label{fig:conv-k}
\end{figure}

\begin{proof} 1. Let $S=[\{ k,\ldots ,2k-1\}]_{2k}$. The terminal positions of $\M$ are given by $T_{\M}=\{0, 1,\ldots, k-1\}\subset S^c$. For any position $x\in S$, the position $x-k \notin S$. Also, for $x\notin S$, the position $x-k \in S$, so $S=P(\M)=\M^1$.\\

\noindent 2. Let $S =  \{k,\ldots , 2k-1\}\cup \{4k-1, 6k-1,\ldots\}$.  The allowed moves are of the form $m = i\cdot 2k + r$ with $k \leq r \leq 2k-1$ and $i \ge 0$. Since $\M^1\cap\M_k=\{0,\ldots, 3k-1\}$, these moves are already fixed as P-positions.  If $3k \leq x \leq 4k-2$, then $x-(2k-1) \in S$, so $x \in N(\M^1)$. If  $x=j\cdot 2k -1$ with $j \ge 2$, then $x-m \in \{0,\ldots,k-1\} \subset S^c$.  Also, for any  $x > 4k-1$ with $x \notin S$, $x = j\cdot 2k+r$ with $0 \le r \le 2k-2$ and $j \ge 2$. Then for $0 \le r < k-1$, $x-m \in S$ for $m = (j-1)\cdot 2k + k+r$, and for $k \le r \le 2k-2$, $x -( r+1) \in S$.\\

\noindent 3. Let $S = \{k,\ldots , 2k-1\}\cup \{4k-1,\ldots , 5k-2\}\cup \{7k-2, 9k-2,\ldots\}$. Note that $\M^2 \cap \M_k= \{0, \ldots , 4k-1\}$. If $x\in \{4k-2,\ldots , 5k-2\}$, then the possible moves from $x$ are of the form $m\in \{k,\ldots , 2k-1\}\cup \{4k-1\}$, which gives $m-x\in \{1,\ldots , k-1\}\cup \{2k+1,\ldots , 4k-2\}\subset N(\M^2)$. Suppose next that $x\in \{5k-1,\ldots , 7k-3\}$. Then there is a move $m\in \{k,\ldots , 2k-1\}$ to a position in the set $\{4k-1,\ldots , 5k-2\}\subset P(\M^2)$, so $x\in N(\M^2)$. If $x\in \{7k-2, 9k-2,\ldots\}$, then one can easily check that there is no move to any $y \in S$. If $7k-2\le x\not\in \{7k-2, 9k-2,\ldots\}$, then for  $(2i-1)k-1 \le x\le (2i)k-2$ and $i \ge 4$,  the move $m=2(i-1)k-1$ will lead to a losing position in $\{k,\ldots, 2k-1\}$, while for $(2i)k-1 \le x\le (2i+1)k-1$, the move leading to a P-position is $m=2(i-2)k-1$.\\

\noindent 4. Note that $\M^3$ is identical with $\M_k$ for positions  $x \le 7k-2$,  so it remains to investigate the case $x> 7k-2$. Here the argument is similar to 3.\\

\noindent 5. This follows from Theorem~\ref{thm:min1d}.\end{proof}

\begin{cor}\label{cor:varphi}
For  $\M=\{k\}$, convergence to the limit set $\M^\infty=\M_k$ occurs in a finite number of steps. In particular, $\varphi(\{0\}) = \varphi(\{1\}) = 1$, and for $k \geq 2$, $\varphi(\{k\}) = 5$.
\end{cor}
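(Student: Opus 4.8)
The plan is to treat Corollary~\ref{cor:varphi} as a direct bookkeeping consequence of Lemma~\ref{lem:varphi5} together with the small cases $k=0,1$. First I would dispose of the trivial values: for $\M=\{0\}$ we have $\0\in\M$, so by Observation~\ref{obs:Pempty} $P(\M)=\varnothing=\M_0=\M^\infty$, and since $\M\ne\varnothing=\M^1$ already, $\varphi(\{0\})=1$. For $\M=\{1\}$, one checks directly that $P(\{1\})=\{x\mid x\ \text{odd}\}$; but $p_1=3\cdot1-1=2$ and $\M_1=\{ip_1+1\mid i\in\N_0\}=\{1,3,5,\dots\}$, so $\M^1=\M_1=\M^\infty$, while $\M^0=\{1\}\ne\M_1$, giving $\varphi(\{1\})=1$.

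Next, for $k\ge 2$, I would invoke Lemma~\ref{lem:varphi5} to pin down $\varphi(\{k\})=5$ by establishing two things: (i) $\M^5=\M^\infty$, and (ii) $\M^i\ne\M^\infty$ for $i\le 4$. Part (i) is immediate from Lemma~\ref{lem:varphi5}(5), which says $\M^5=\M_k$, combined with Corollary~\ref{cor:limit} (or Theorem~\ref{thm:onedim} plus Lemma~\ref{lem:mincond}), which identifies $\M^\infty=\M_k$ since $k=\min(\M)$; once $\M^5=\M_k=\M_k^\star$ the sequence is constant thereafter, so $\varphi(\{k\})\le 5$. Part (ii) requires noting that $\M^0,\M^1,\M^2,\M^3,\M^4$ as described in Lemma~\ref{lem:varphi5}(1)--(4) are each visibly distinct from $\M_k$: e.g. $\M^4=\M_k\cap\{0,\dots,10k-3\}$ is a strict subset of $\M_k$ because $\M_k$ contains arbitrarily large elements (the group $\{3p_k+k,\dots\}$, say), while $\M^3,\M^2,\M^1,\M^0$ already disagree with $\M_k$ at still smaller values (for instance $\M^3$ contains $7k-2\notin\M_k$, or $\M^3$ omits the second group $\{4k-1,\dots,5k-2\}$... — pick whichever witness is cleanest from the explicit descriptions). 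Hence $\varphi(\{k\})=5$ exactly.

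I do not anticipate a genuine obstacle here, since all the content lives in Lemma~\ref{lem:varphi5}; the only care needed is in the bookkeeping for the edge cases $k=0,1$ (where $\M_k$ has period $p_k\in\{-1,2\}$ and the formula for $\M_k$ must be read carefully — in particular $\M_1=\{1,3,5,\dots\}$ is \emph{already} reflexive, so convergence is immediate and $\varphi=1$ rather than $5$) and in checking that the $k\ge2$ argument genuinely needs $k\ge2$, which it does because for $k=1$ the groups of $k=1$ consecutive integers degenerate and steps 2--4 of Lemma~\ref{lem:varphi5} collapse. So the write-up is essentially: state the three cases, cite Observation~\ref{obs:Pempty} for $k=0$, do the one-line computation for $k=1$, and for $k\ge2$ combine Lemma~\ref{lem:varphi5}(4) (to see $\M^4\subsetneq\M_k$, hence $\varphi>4$) with Lemma~\ref{lem:varphi5}(5) and Corollary~\ref{cor:limit} (to see $\M^5=\M^\infty$, hence $\varphi\le5$).
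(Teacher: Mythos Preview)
Your plan matches the paper's proof essentially line for line: Observation~\ref{obs:Pempty} handles $k=0$, a one-line computation of $P(\{1\})=\{1,3,5,\ldots\}=\M_1$ handles $k=1$, and Lemma~\ref{lem:varphi5} handles $k\ge 2$; you are in fact slightly more careful than the paper in explicitly arguing $\varphi>4$ via $\M^4=\M_k\cap\{0,\ldots,10k-3\}\subsetneq\M_k$. One small slip in your parenthetical: both suggested witnesses for $\M^3\ne\M_k$ are wrong, since $7k-2=2p_k+k\in\M_k$ and $\M^3$ \emph{does} contain $\{4k-1,\ldots,5k-2\}$ --- but as you note, these are unnecessary once you have $\M^4\ne\M_k$ (reflexivity of $\M_k$ forces $\M^j\ne\M_k$ for all $j\le 4$), so the main argument is unaffected.
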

\begin{proof}
For $k=0$, it follows from Observation~\ref{obs:Pempty} that $\M^1=\varnothing=\M_0$.  For $k=1$, let $S=\{1,3,5,\ldots\}=\M_1$. Then for $x\in S$, $y=x-1\in S^c$, and likewise, for $x\in S^c$, $y=x-1\in S$, so $P(\{1\})=\M_1$. In both cases, $\varphi(\{k\}) = 1$. For $k \ge 2$,  $\varphi(\{k\}) = 5$ follows by Lemma~\ref{lem:varphi5}.
\end{proof}

We do not yet understand $\varphi(\M)$ for any other case than the one described in Corollary~\ref{cor:varphi}. We have some experimental suggestions in the 2-dimensional case, which leads us to the next section.

\section{Structures in 2 dimensions}\label{sec:twoheaps}
This section is intended as an overview of the behavior in 2 dimensions, and should be regarded as an informal exposition. We indicate experimental similarities and differences with the known structure in one dimension.  
 
In one dimension, all reflexive games have the same geometrical  structure up to rescaling (as demonstrated in Section~\ref{sec:oneheap}). In two dimensions, the geometrical structures of the reflexive games vary much more, even though for certain classes of games we still obtain similar rescaled structures. At the very least, our experiments show that we must distinguish classes of games according to where the minimal moves occur, as they must have different behavior due to Theorem~\ref{thm:general}. That the conjectured behavior is the same within each class is harder to prove in general, but possible to be shown in certain cases.  The following classification scheme is the least required:

\begin{enumerate}
\item The game has only one minimal move
\begin{enumerate}
\item on one of the axes
\item not on an axis
\end{enumerate}
\item The game has exactly two minimal moves
\begin{enumerate}
\item none of the minimal moves is on an axis
\item exactly one of the minimal moves is on an axis
\item both minimal moves are on the axes
\end{enumerate}
\item The game has at least three minimal moves
\begin{enumerate}
\item none of the minimal moves is on an axis
\item exactly one of the minimal moves is on an axis
\item there is a minimal move on each axis
\end{enumerate}
\end{enumerate}

The class 2(c) most closely resembles the one-dimensional case, as the two-dimensional limit game inherits some of its structure from the respective one-dimensional limit games. Figure~\ref{fig:03_40iter} shows the iterations for a game of the form $\M=\min({\M})=\{(k, 0), (0, \ell)\}$, the simplest form of  case 2(c), for $k=4$ and $\ell = 3$. It appears that this game converges to a limit game after seven steps. In addition, after five steps, the behavior along the axes is as described in Theorem~\ref{thm:onedim}.

\begin{figure}[ht!]
\begin{center}
\includegraphics[scale=0.2]{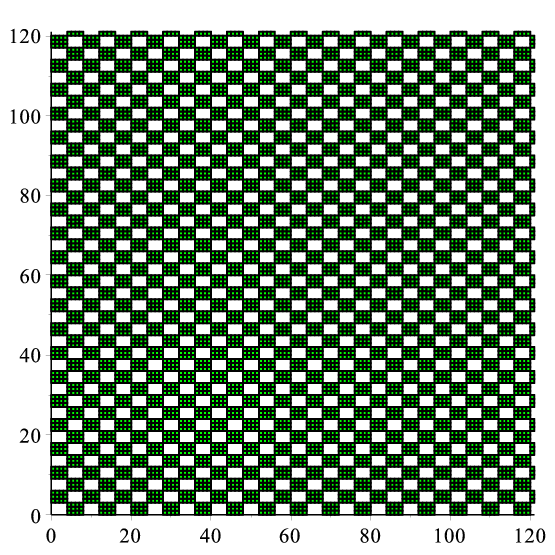}\hspace{1mm}
\includegraphics[scale=0.2]{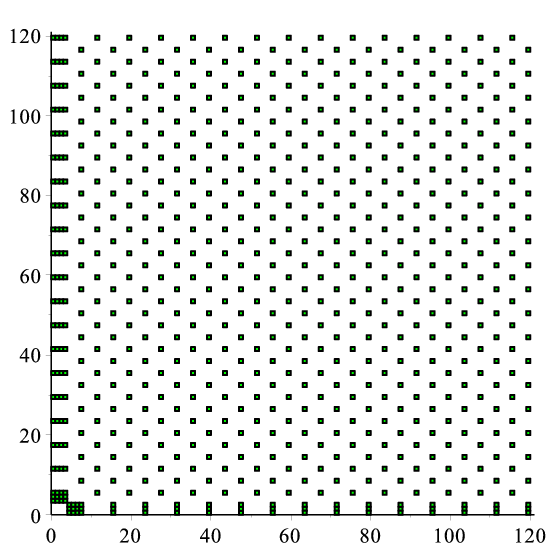}\hspace{1mm}
\includegraphics[scale=0.2]{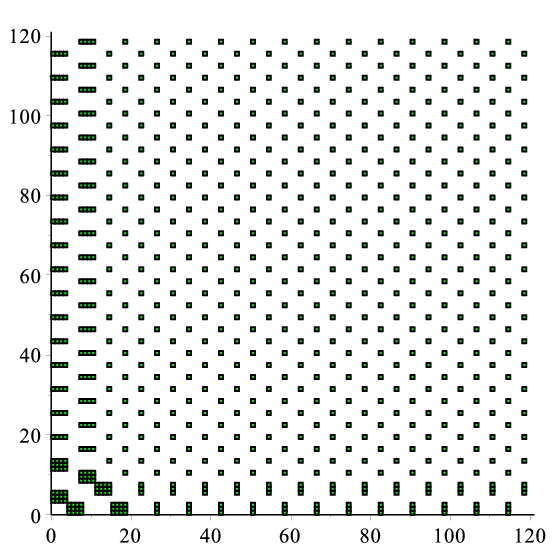}\hspace{1mm}
\includegraphics[scale=0.2]{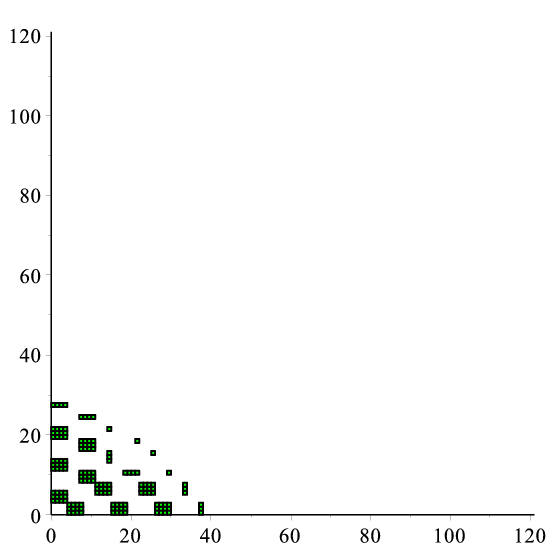}\hspace{1mm}
\includegraphics[scale=0.2]{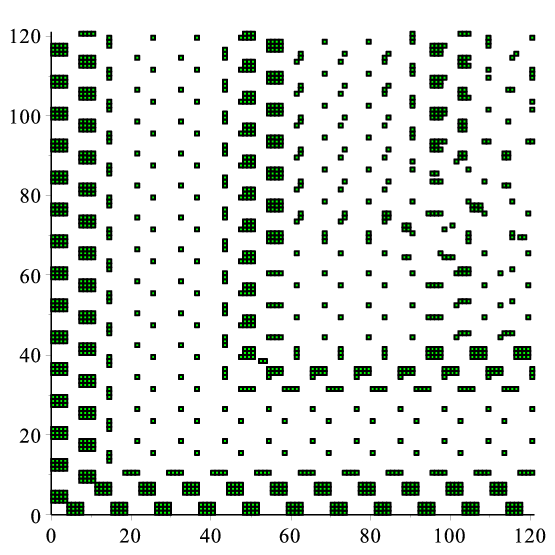}\hspace{1mm}
\includegraphics[scale=0.2]{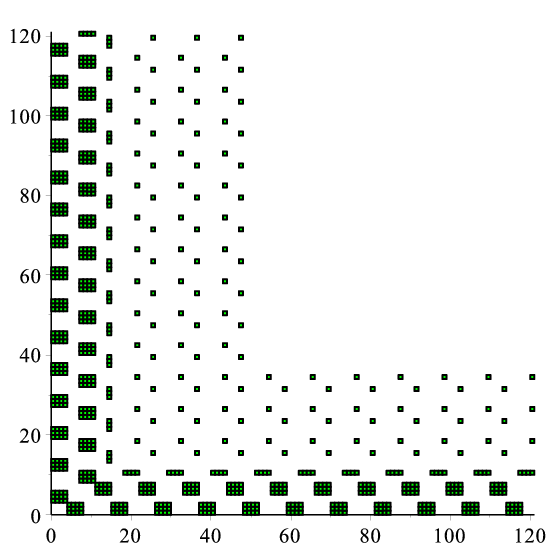}\hspace{1 mm}
\includegraphics[scale=0.2]{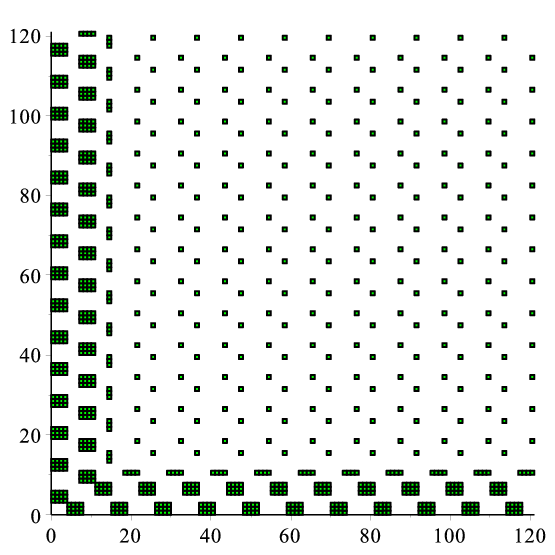}\hspace{1 mm}
\includegraphics[scale=0.2]{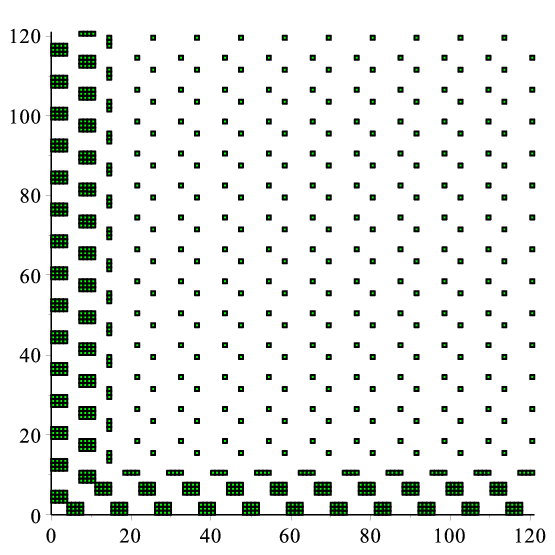}
\end{center}
\caption{Iterations of the mis\`ere-play $\ms$-operator for the game $\M=\{(4,0),(0,3)\}$ where the game shown in the upper left is $\M^\star$. The limit game is reached after 7 steps in this case.}
\label{fig:03_40iter}
\end{figure}

Informally we define $\M_{k, \ell}$ as the type of limit game shown in Figure~\ref{fig:03_40iter} (for $k=4$ and $\ell = 3$). It can be defined in a periodic manner based on $k, \ell$, and the one-dimensional associated periods. We are in the process of proving this game to be reflexive~\cite{CDHL}. Due to the periodic structure of $\M_{k, \ell}$, we know the limit game to be periodic along half lines of rational slopes. The structure of the limit game is generic, but the number of iterations until convergence can vary for this class. 

Computer explorations for games in the other classes (see for example Figures~\ref{fig:3a} and~\ref{fig:3cx}) suggest that all limit games have some type of periodic structure, which leads to the following conjecture.

\begin{conj}
Limit games for all two-dimensional vector subtraction games under the mis\`ere-play $\ms$-operator are ultimately periodic along any line of rational slope.
\end{conj}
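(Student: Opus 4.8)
## Proof proposal for the conjecture

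The plan is to mimic, in two dimensions, the three-step strategy that worked on one heap: (1) prove a structural "sum-set" characterization of reflexive games restricted to a fixed minimal-move class, (2) exhibit an explicit periodic family $\M_{k,\ell}$ (and analogues for the other classes of Section~\ref{sec:twoheaps}) and verify it is reflexive via Theorem~\ref{thm:sumset}, and (3) invoke Theorem~\ref{thm:general} to conclude that \emph{every} two-dimensional game with the given set of minimal moves converges to that explicit periodic game, hence is ultimately periodic along every rational ray. The key leverage is that, by Theorem~\ref{thm:conv} and Theorem~\ref{thm:general}, we never have to track the dynamics of an arbitrary game: it suffices to understand the (unique) reflexive limit attached to each possible set $\min(\M)$, and to show each such limit is ultimately periodic along rational lines.

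Concretely, I would first fix a rational direction $\boldsymbol v = (a,b)\in\N_0^2$ and, for a reflexive game $\A$ with $A+A = A^c\setminus T_\A$ (Theorem~\ref{thm:sumset}), study the trace of $A$ on the arithmetic progression $\{\boldsymbol x_0 + t\boldsymbol v : t\in\N_0\}$ for each base point $\boldsymbol x_0$ in a suitable transversal. The sum-set equation is "translation-friendly": if $\boldsymbol x\in A^c\setminus T_\A$ then $\boldsymbol x = \boldsymbol y + \boldsymbol z$ with $\boldsymbol y,\boldsymbol z\in A$, and one can try to push this witness decomposition along $\boldsymbol v$. I expect one can set up a finite-state description: the "state" at parameter $t$ records the membership pattern of $A$ in a bounded window around $\boldsymbol x_0 + t\boldsymbol v$ (a window large enough to contain all relevant small moves, in particular $\min(\M)$, which is fixed by Lemma~\ref{lem:mincond}). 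The sum-set relation then constrains how the state at $t+1$ depends on the state at $t$; since there are finitely many states, the state sequence is eventually periodic, which gives eventual periodicity of $A$ along $\boldsymbol v$. This is essentially an automaticity/regularity argument, and it is the analogue of "arguing modulo $p_k$" in the one-dimensional proof.

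For the explicit-family half, for class 2(c) with $\min(\M) = \{(k,0),(0,\ell)\}$ I would write down $\M_{k,\ell}$ by prescribing its intersection with each "column" $\{x\}\times\N_0$ and each "row" $\N_0\times\{y\}$ using the one-dimensional periods $p_k = 3k-1$, $p_\ell = 3\ell-1$ (matching the observed convergence in Figure~\ref{fig:03_40iter} and Theorem~\ref{thm:onedim} along the axes), interpolating in the interior by a periodicity of period $(p_k,p_\ell)$ up to a bounded transient. Then I would verify $\M_{k,\ell} + \M_{k,\ell} = \M_{k,\ell}^c \setminus T_{\M_{k,\ell}}$ by a finite check on one period plus the transient region — the same bookkeeping as in the "$\Leftarrow$" direction of Theorem~\ref{thm:onedim}, only two-dimensional. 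Theorem~\ref{thm:general} then forces $\M^\infty = \M_{k,\ell}$ for every $\M$ with that minimal set, and periodicity of $\M_{k,\ell}$ along rational lines is immediate from its periodic definition. One repeats this for the finitely many qualitative shapes of $\min(\M)$ in the classification list; classes with a minimal move strictly in the interior (1(b), 2(a), 3(a)) are where the structure is most genuinely two-dimensional and where the explicit family is hardest to pin down.

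The main obstacle, I expect, is precisely proving that the reflexive limit is ultimately periodic along every rational direction \emph{without} already knowing an explicit formula for it — i.e. making the finite-state argument of the second paragraph rigorous and uniform. The difficulty is that the window size needed to make "state at $t+1$ determined by state at $t$" actually work may a priori grow with $t$: a sum-set decomposition $\boldsymbol x = \boldsymbol y + \boldsymbol z$ can use a summand $\boldsymbol y$ that is far from $\boldsymbol x$ in the direction $\boldsymbol v$, so the naive bounded-window Markov property can fail. Controlling this requires showing that decompositions can always be chosen "locally" — plausibly true once $A$ is known to contain long periodic strips near the axes (bootstrapping from Theorem~\ref{thm:onedim}), but this is the crux and is not purely formal. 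A fallback is to prove the conjecture class-by-class via explicit families only, accepting a longer case analysis; the genuinely open point is a uniform, structural proof of rational-line periodicity of arbitrary two-dimensional reflexive subtraction games.
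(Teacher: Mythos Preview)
The statement you are attempting to prove is a \emph{conjecture} in the paper, not a theorem: the authors do not give a proof, only computational evidence (Figures~\ref{fig:3a} and~\ref{fig:3cx}) and the remark that they are ``in the process of proving'' the special case $\M_{k,\ell}$ reflexive in forthcoming work. So there is no proof in the paper to compare against; any correct argument would be new mathematics.

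Your proposal is a reasonable research outline, but it is not a proof, and you correctly identify the gap yourself. Steps (1)--(3) of your plan are sound in principle: by Theorems~\ref{thm:conv} and~\ref{thm:general} the limit game depends only on $\min(\M)$, so it would suffice to show that every two-dimensional reflexive game is ultimately periodic along rational rays. But your ``finite-state'' argument in the second paragraph is exactly where the difficulty lies, and your own final paragraph concedes it: the sum-set identity $A+A=A^c\setminus T_\A$ does not obviously give a bounded-window Markov property along a direction $\boldsymbol v$, because a witness decomposition $\boldsymbol x=\boldsymbol y+\boldsymbol z$ may use summands arbitrarily far from $\boldsymbol x$. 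Without a \emph{uniform} bound on how far one must look for a decomposition, the state space is not finite and eventual periodicity does not follow. The ``fallback'' of handling each minimal-move class by an explicit periodic family is also not a proof as stated, since there are infinitely many possible antichains $\min(\M)\subset\N_0^2$ (not finitely many ``qualitative shapes''), and you have not supplied the explicit family for any class, let alone verified Theorem~\ref{thm:sumset} for it. In short: the strategy is plausible, but the crux --- locality of sum-set witnesses, or alternatively an explicit description covering every antichain --- remains entirely open, which is why the paper states this as a conjecture.
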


%

Returning to class 2(c), one can ask which games $\A_{k,\ell}$ have the property that $P(\A_{k,\ell})=\M_{k,\ell}$ (see Theorem~\ref{thm:min1d} for the one-dimensional equivalent), and more specifically, whether there is a smallest such game. In Figure~\ref{fig:invariant} we display the `smallest' game discovered so far that has the reflexive game $\M_{k,\ell}$ as its set of P-positions. 

\begin{ques}
See the left most picture in Figure~\ref{fig:invariant}. Is this a generic description of a smallest game with a reflexive game of type 2(c) as its set of P-positions?
\end{ques}

\begin{figure}[ht]
\begin{center}
\includegraphics[scale=0.25]{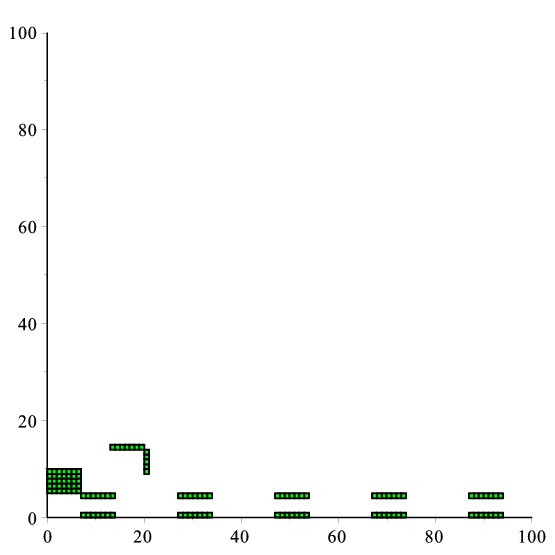}
\includegraphics[scale=0.25]{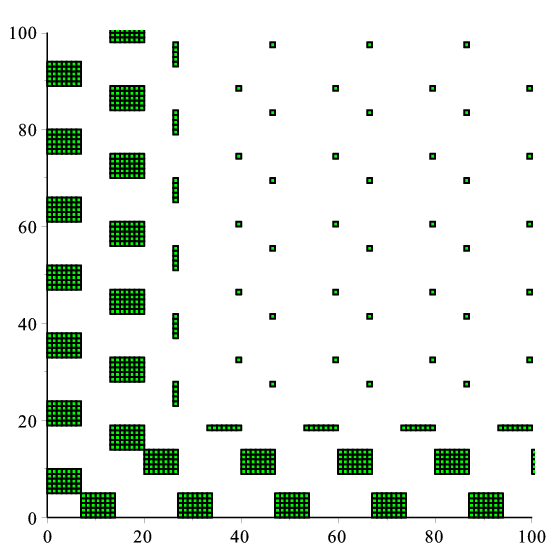}
\end{center}
\caption{The graph on the right represents the P-positions of the game $\A_{k,l}$ shown on the left, with $(k, \ell)=(7,5)$.}
\label{fig:invariant}
\end{figure}

%

We conclude this section with some explorations in cases when there are at least three minimal moves. 
Suppose that $\min (\M)\cap \{(0,x),(x,0)\mid x\in \N\}=\varnothing$, so we are in class 3(a). Then $\varphi(\M)=2$, that is, $\M^{\star\star}$ is reflexive. It is not hard (but somewhat technical) to prove this statement by an explicit description of the generic description of the right most graph. Note also that this `penultimate lower ideal' is already a subset of the second graph.

\begin{figure}[hb]
\begin{center}
\includegraphics[scale=0.2]{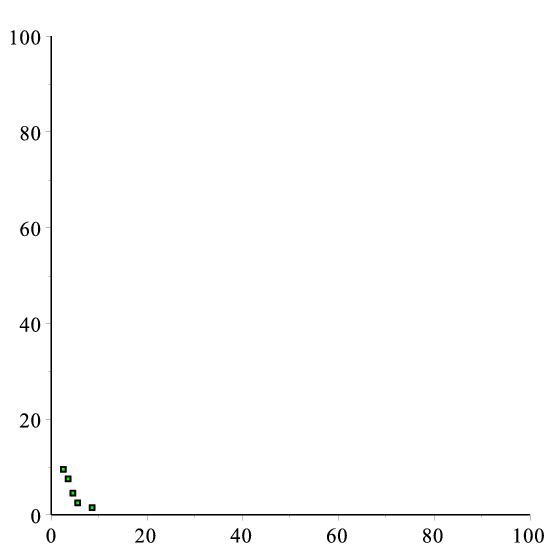}
\includegraphics[scale=0.2]{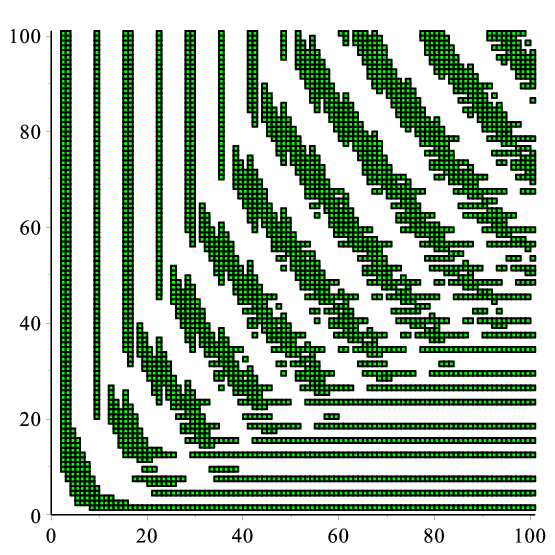}
\includegraphics[scale=0.2]{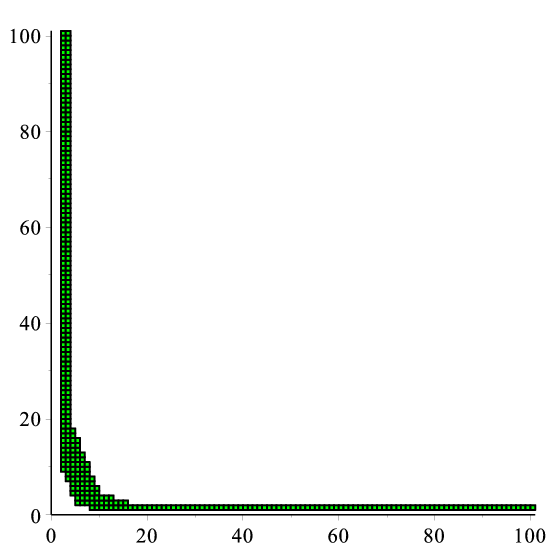}
\includegraphics[scale=0.2]{3a_miserstarstar}
\end{center}
\caption{The graphs show convergence after two iterations for the game $\M=\{(2,9),(3,7),(4,4),(5,2),(8,1)\}$, an example of case 3(a).}
\label{fig:3a}
\end{figure}

By comparison, the case 3(c) has most variation, and we do not yet know if all games in this class converge in a finite number of steps. We conclude by showing behavior of four games of the form $$\M_x=\{(0,5),(x,x),(5,0)\},$$ for $x=1,2,3,4$. Based on Figure~\ref{fig:3cx}, we hypothesize that  $\varphi(\M_1)=7,\varphi(\M_2)=6,\varphi(\M_3)=6,\varphi(\M_4)=5$. Note that some limit games have generalized `L-shapes', while others have `negative-slope-diagonal-stripes', and yet others appear to be a blend of the two. 

\begin{figure}[htb]

\includegraphics[scale=0.087]{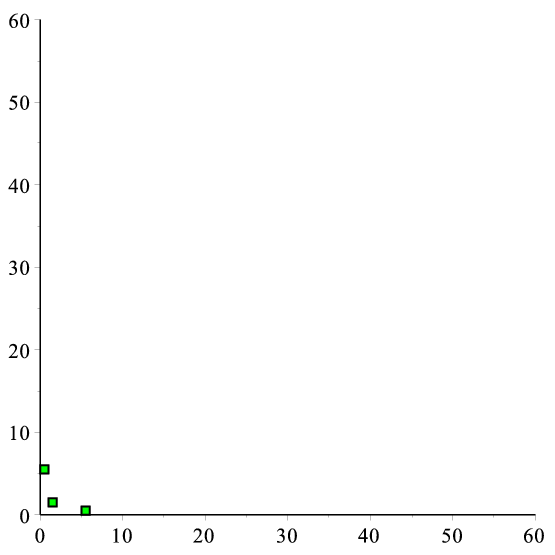}
\includegraphics[scale=0.087]{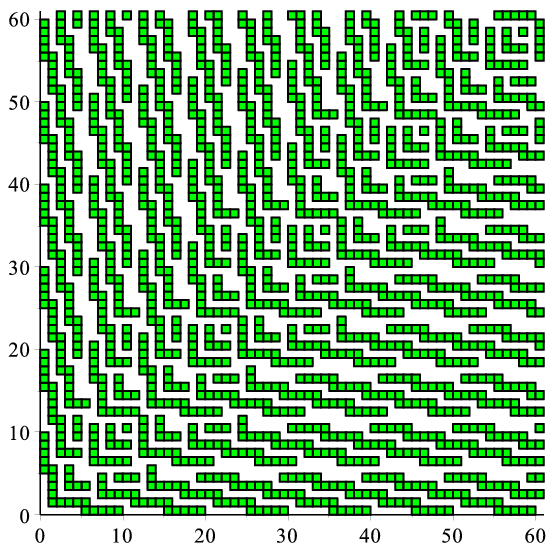}
\includegraphics[scale=0.087]{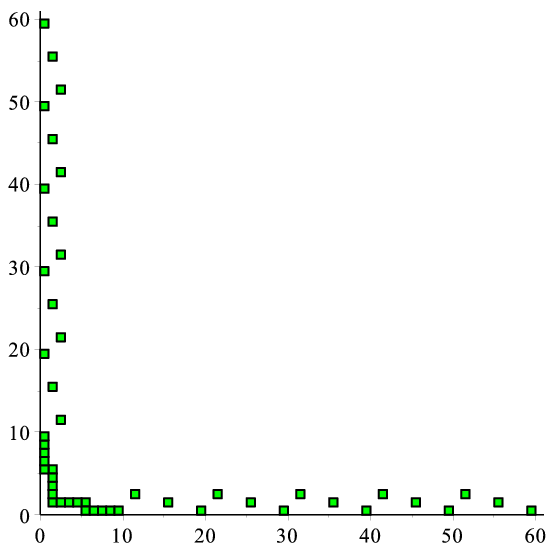}
\includegraphics[scale=0.087]{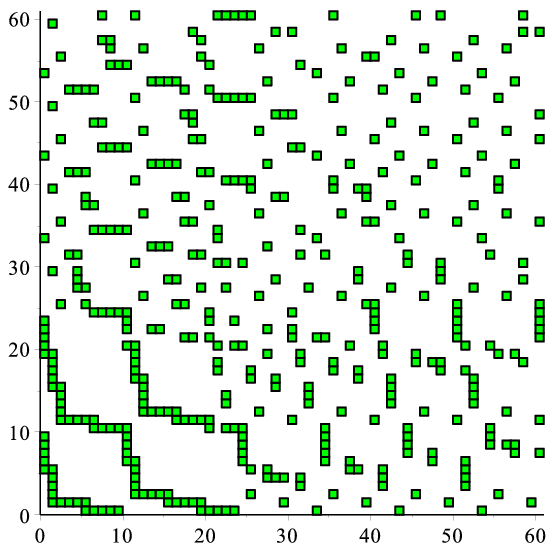}
\includegraphics[scale=0.087]{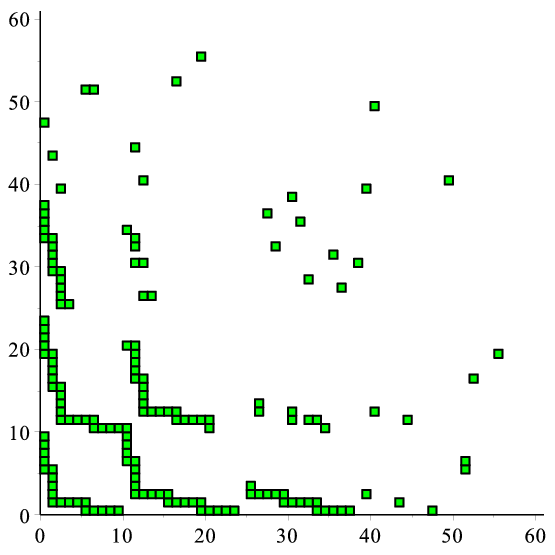}
\includegraphics[scale=0.087]{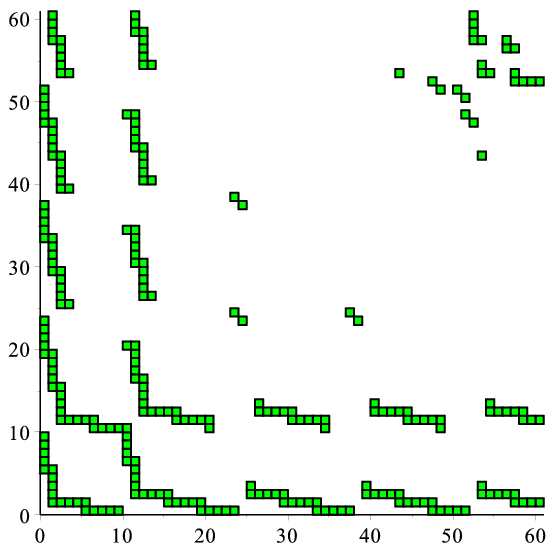}
\includegraphics[scale=0.087]{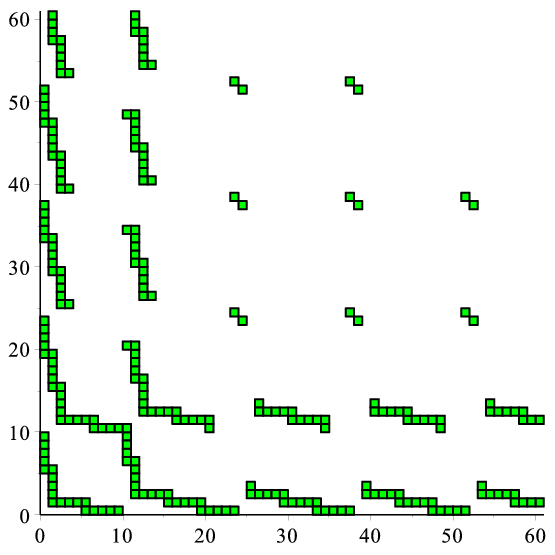}
\includegraphics[scale=0.087]{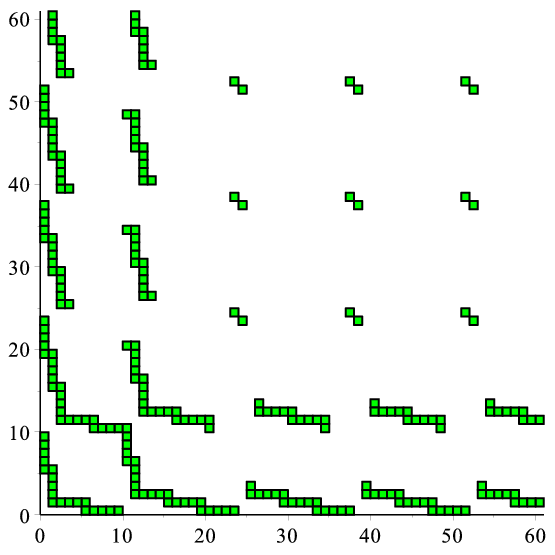}
\includegraphics[scale=0.087]{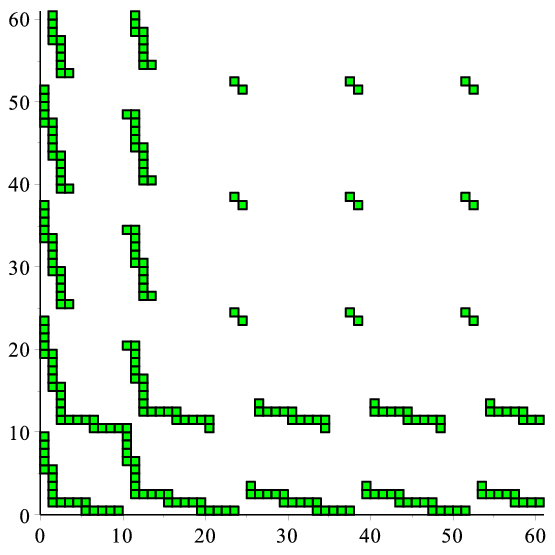}

\vspace{5 mm}
\includegraphics[scale=0.098]{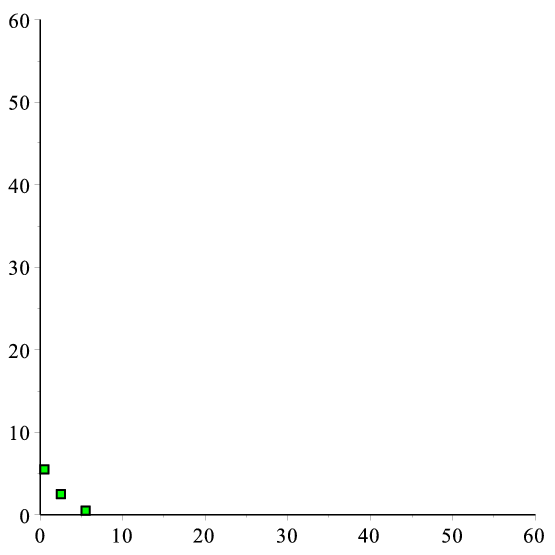}
\includegraphics[scale=0.098]{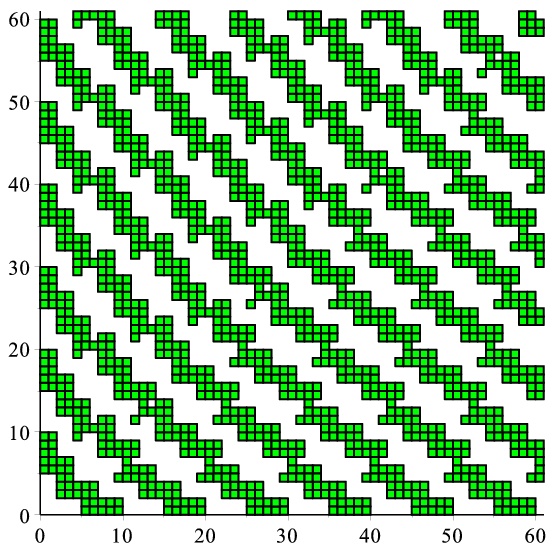}
\includegraphics[scale=0.098]{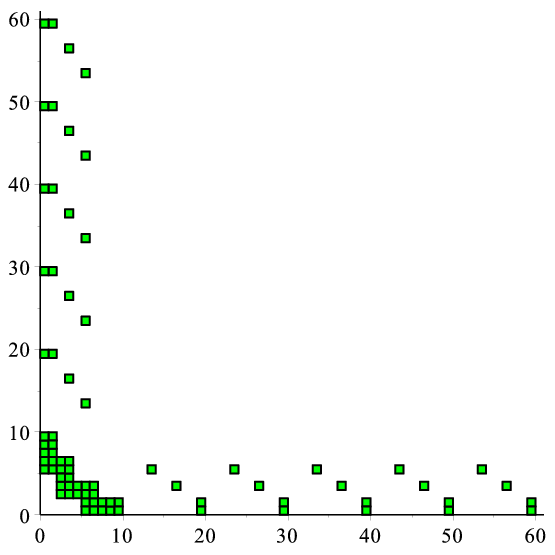}
\includegraphics[scale=0.098]{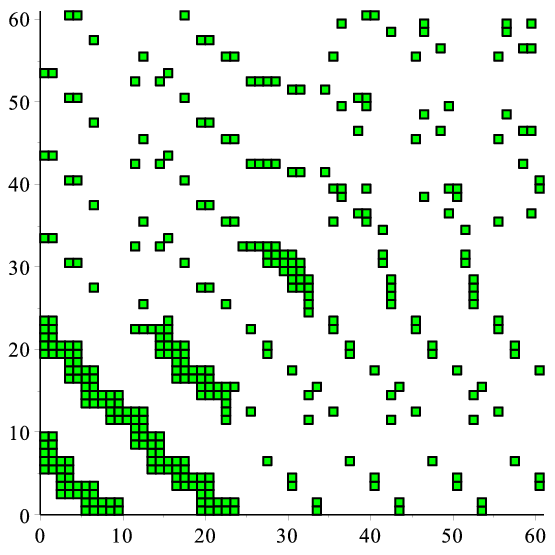}
\includegraphics[scale=0.098]{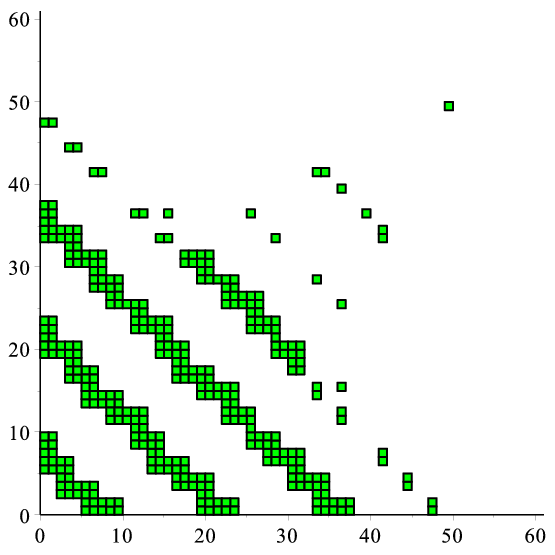}
\includegraphics[scale=0.098]{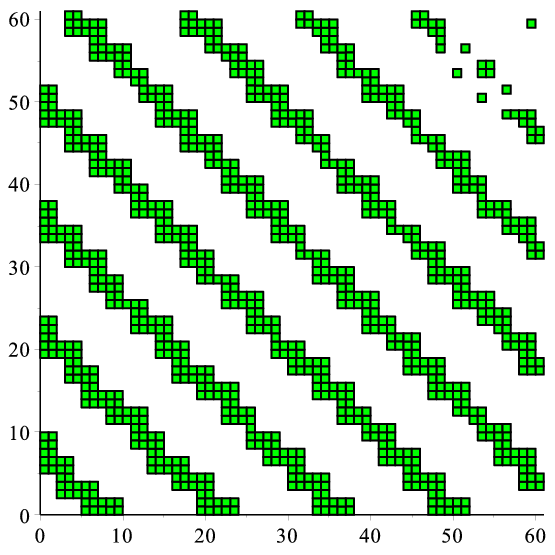}
\includegraphics[scale=0.098]{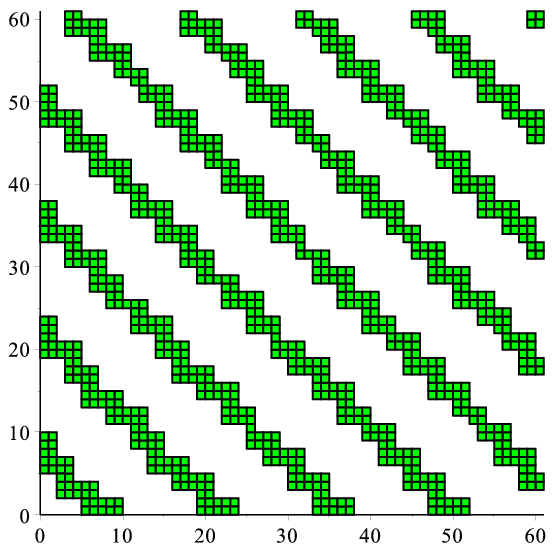}
\includegraphics[scale=0.098]{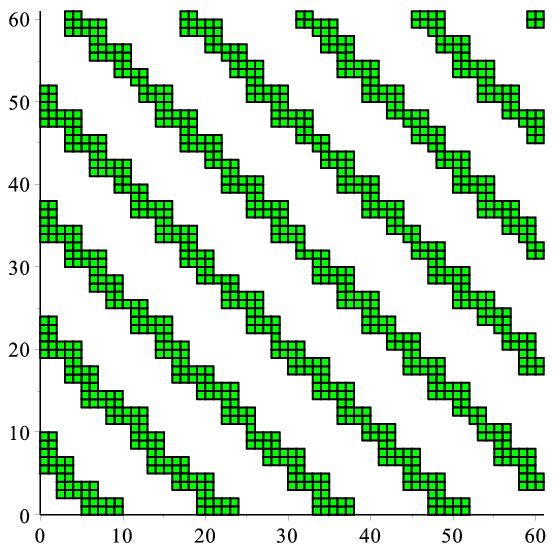}

\vspace{5 mm}
\includegraphics[scale=0.098]{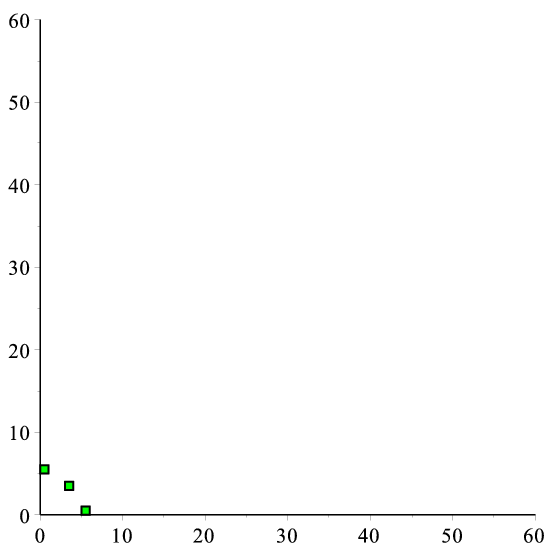}
\includegraphics[scale=0.098]{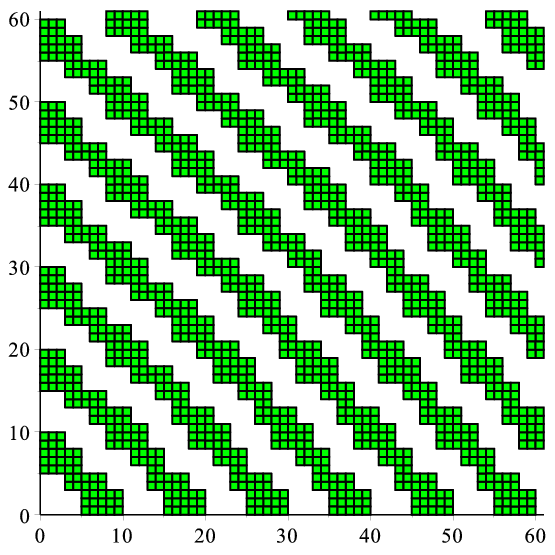}
\includegraphics[scale=0.098]{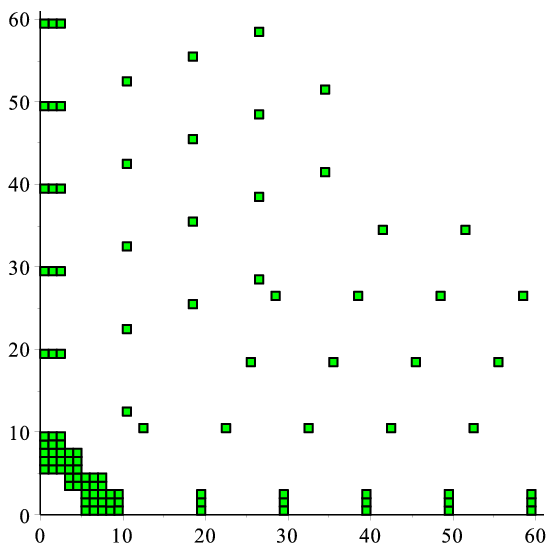}
\includegraphics[scale=0.098]{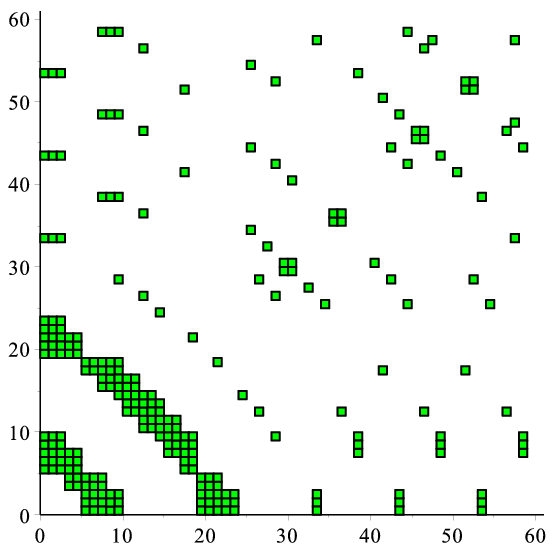}
\includegraphics[scale=0.098]{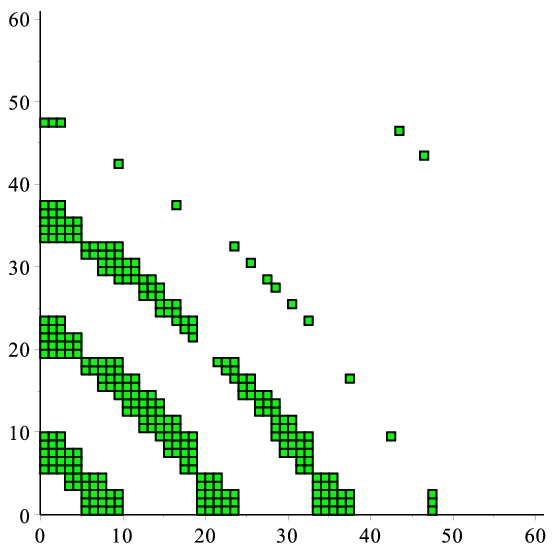}
\includegraphics[scale=0.098]{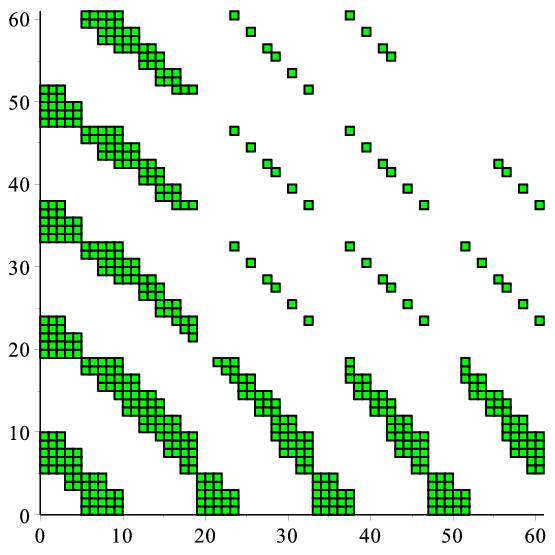}
\includegraphics[scale=0.098]{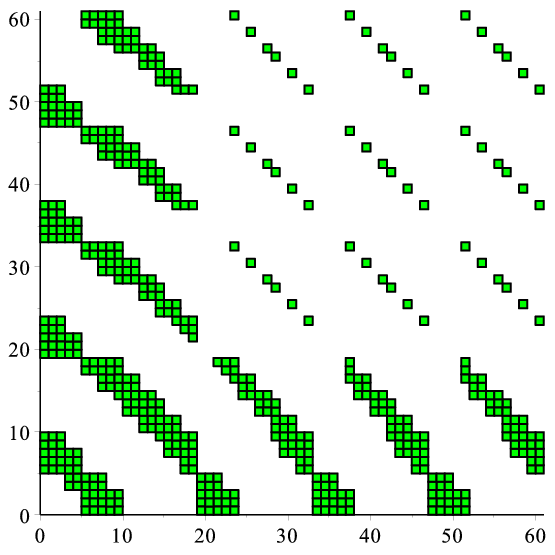}
\includegraphics[scale=0.098]{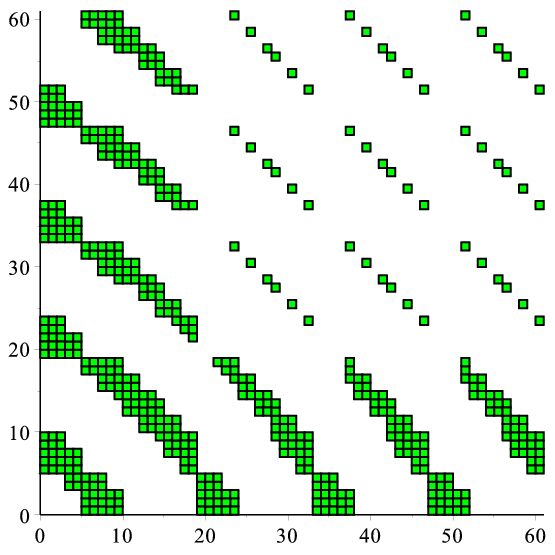}

\vspace{5 mm}
\includegraphics[scale=0.115]{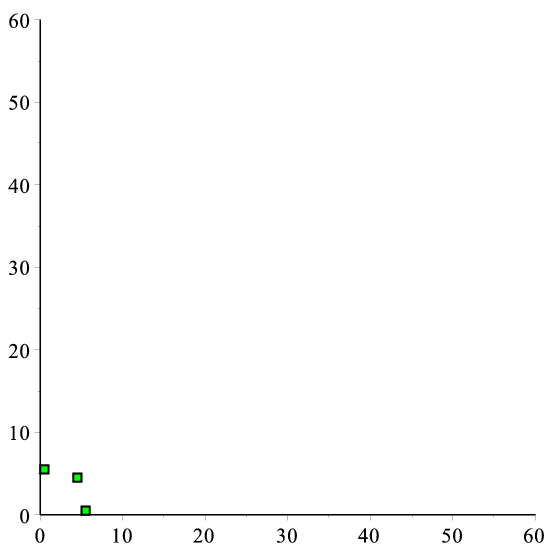}
\includegraphics[scale=0.115]{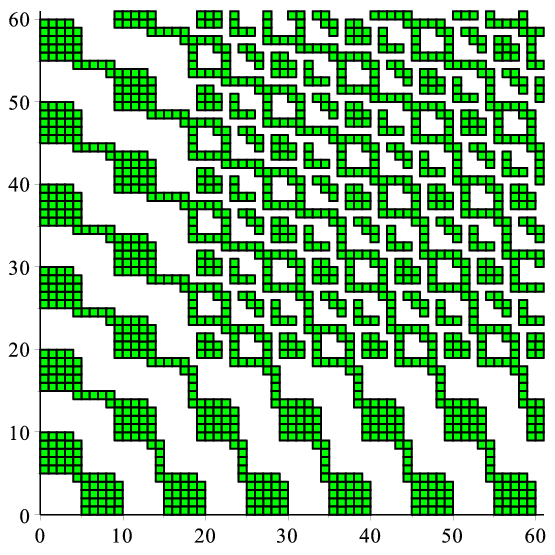}
\includegraphics[scale=0.115]{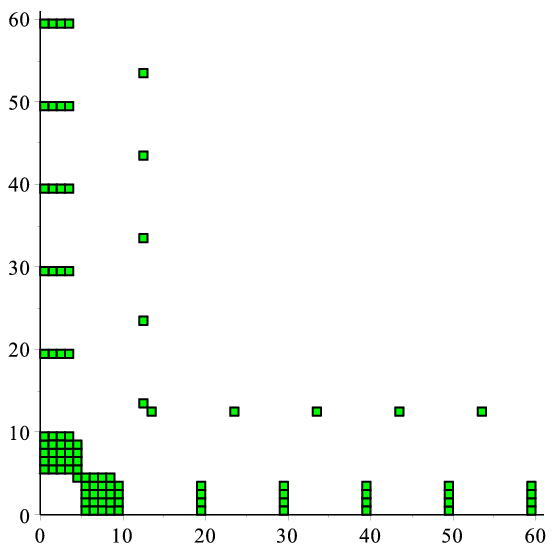}
\includegraphics[scale=0.115]{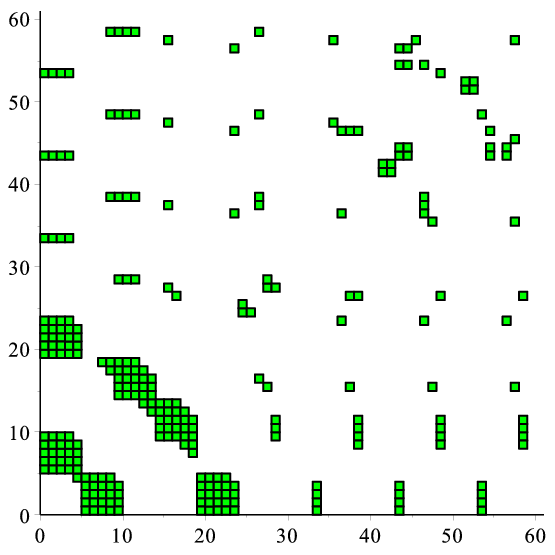}
\includegraphics[scale=0.115]{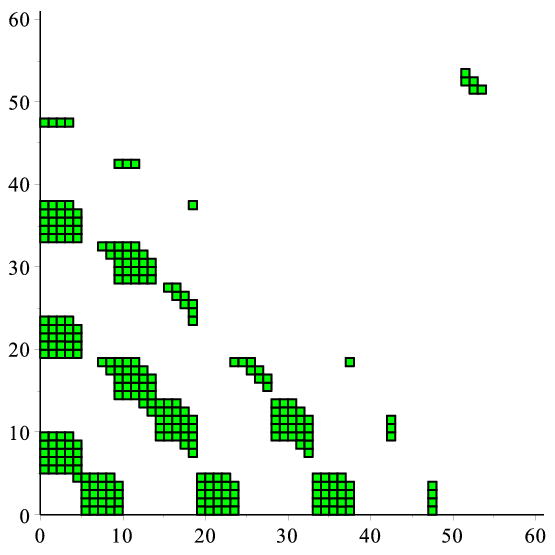}
\includegraphics[scale=0.115]{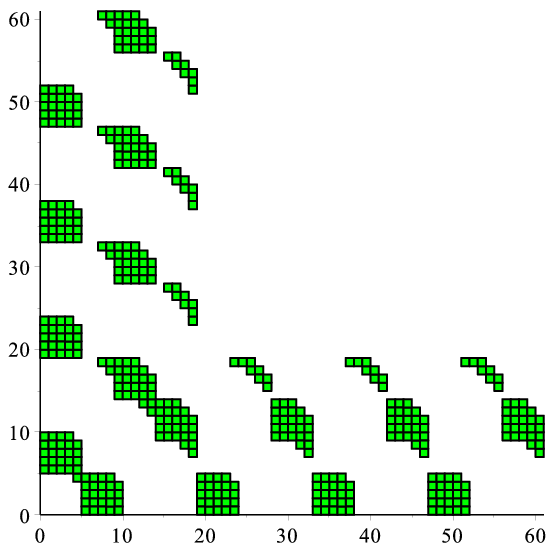}
\includegraphics[scale=0.115]{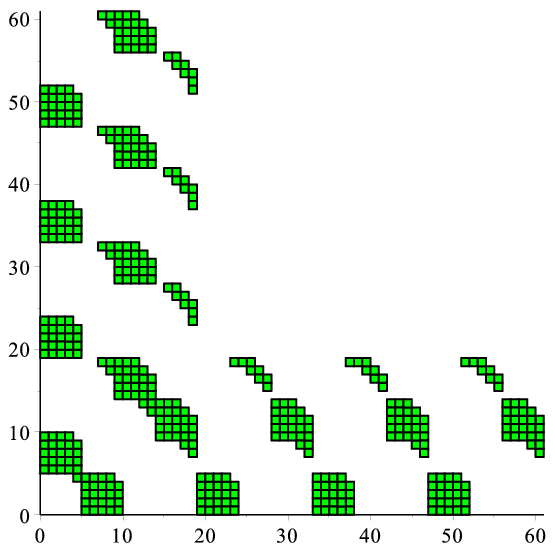}

\caption{Recurrence of the $\ms$-operator for 4 type 3c games $\M=\{(0,5),(x,x),(5,0)\}$, for $x=1,2,3,4$.}
\label{fig:3cx}
\end{figure}

 \newpage
The simplest non-trivial game whose limit game has `diagonal stripes of negative slopes' is $\M = \{(0,2),(1,1),(2,0)\}$. It converges in five steps to the game in Figure~\ref{fig:3csimple}. It generalizes the game $\{(0,1),(1,0)\}$, which trivially converges in one step to a checkerboard pattern.

\begin{figure}[h!]
\begin{center}
\includegraphics[scale=0.2]{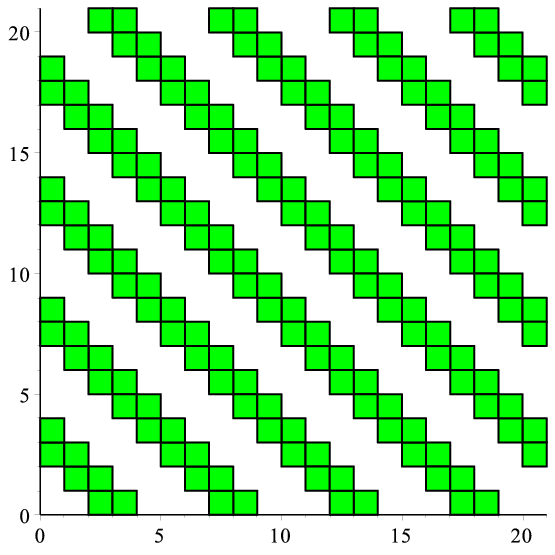}
\end{center}
\caption{A reflexive game with diagonal shaped moves.}
\label{fig:3csimple}
\end{figure}

We have performed many computer experiments in two dimensions, but have not (yet) been able to find any game for which the limit game behaves `randomly' or `chaotically'. This is quite different from reflexive games in normal-play, where the crystal-like patterns so common in mis\`re play are rare.

\clearpage

\end{document}